\def\NAT@def@citea{\def\@citea{\NAT@separator}}
\newcommand{\R}{\ensuremath{\mathds{R}}}
\newcommand{\C}{\ensuremath{\mathds{C}}}
\newcommand{\N}{\ensuremath{\mathds{N}}}
\newcommand{\Sphere}{\ensuremath{\mathds{S}}}
\newcommand{\Lp}[1]{\ensuremath{\mathds{L}}^{#1}} 
\newcommand{\Lpn}[3]{\ensuremath{\| #1\|_{\mathds{L}^{#2}(#3)}}}
\newcommand{\Lpnlr}[3]{\ensuremath{\left\| #1\right\|_{\mathds{L}^{#2}\left(#3\right)}}}
\newcommand{\Lpinfty}[1]{\ensuremath{\left\| #1 \right\|_{\infty}}}
\newcommand{\Sobolev}[2]{\ensuremath{\mathcal{W}^{#1}\left(#2\right)}} 
\newcommand{\Lebesgue}[2]{\thickspace\mathrm{d}\lambda^{#1}(#2)} 
\newcommand{\inte}{\thickspace\mathrm{d}} 
\newcommand{\surfacemeasure}[2]{\thickspace\mathrm{d}\sigma^{#1}(#2)} 
\newcommand{\indikator}{\mathds{1}} 
\newcommand{\sets}[2]{\ensuremath{\left\{ #1 \thinspace\left \vert \thinspace #2\right.\right\}}} 
\newcommand{\scalarpr}[2]{\ensuremath{\left\langle #1, #2 \right\rangle}} 
\newcommand{\ball}[1]{\ensuremath{\mathrm{B}_{#1}}} 
\newcommand{\ballc}[1]{\ensuremath{\mathrm{B}^c_{#1}}} 
\newcommand{\Mn}{\ensuremath{\mathcal{M}_n}} 
\newcommand{\argmin}{\mathop{\mathrm{arg\,min}}}
\newcommand{\EW}[1]{\ensuremath{\mathds{E}\left[#1\right]}} 
\newcommand{\EWind}[2]{\ensuremath{\mathds{E}_{#1}[#2]}} 
\newcommand{\EWindlr}[2]{\ensuremath{\mathds{E}_{#1}\left[#2\right]}} 
\newcommand{\Vari}[1]{\ensuremath{\mathds{V}\hspace{-2pt}\text{ar}\left(#1\right)}} 
\newcommand{\Varind}[2]{\ensuremath{\mathds{V}\hspace{-2pt}\text{ar}_{#1}(#2)}} 
\newcommand{\PP}[1]{\mathds{P}\left(#1\right)} 
\newcommand{\PPindlr}[2]{\mathds{P}_{#1}\left(#2\right)}
\newcommand{\KL}[2]{\ensuremath{\mathrm{KL}\left(#1, #2\right)}} 
\newcommand{\radon}[1]{\ensuremath{\mathcal{R}\left[#1\right]}} 
\newcommand{\radonind}[2]{\ensuremath{\mathcal{R}_{#1}\left[#2\right]}} 
\newcommand{\Fourier}[2]{\ensuremath{\mathcal{F}_{#1}\left[#2\right]}} 
\newcommand{\invFourier}[2]{\ensuremath{\mathcal{F}^{-1}_{#1}\left[#2\right]}} 
\newtheorem{thm}{Theorem}[section]
\newtheorem{lem}[thm]{Lemma}
\newtheorem{cor}[thm]{Corollary}
\newtheorem{prop}[thm]{Proposition}
\theoremstyle{definition}
\newtheorem{rem}[thm]{Remark}
\newtheorem{ex}[thm]{Example}
\numberwithin{equation}{section}
\begin{document}

\title{Adaptive local density estimation in tomography}

\author{
\name{Sergio Brenner Miguel\textsuperscript{a} and Janine Steck\textsuperscript{b}}
\affil{\textsuperscript{a}Institut f\"{u}r angewandte Mathematik und Interdisciplinary Center for Scientific Computing (IWR), Im Neuenheimer Feld 205, Heidelberg University, Germany, brennermiguel\raisebox{-1.5pt}{@}math.uni-heidelberg.de}
\affil{\textsuperscript{b}Institute of Mathematics, Humboldt-Universit\"{a}t zu Berlin, janine.steck\raisebox{-1.5pt}{@}hu-berlin.de }
}

\maketitle

\begin{abstract}
	We study the non-parametric estimation of a multidimensional unknown density $f$ in a tomography problem based on independent and identically distributed observations, whose common density is proportional to the Radon transform of $f$. We identify the underlying statistical inverse problem and use a spectral cut-off regularisation to deduce an estimator. A fully data-driven choice of the cut-off parameter $m \in \R^+$ is proposed and studied. To discuss the bias-variance trade off, we consider Sobolev spaces and show the minimax-optimality of the spectral cut-off density estimator. In a simulation study, we illustrate a reasonable behaviour of the studied fully data-driven estimator.
\end{abstract}

\begin{keywords}
Non-parametric density estimation, statistical inverse problem, Radon transform, adaptation, minimax theory\\
AMS 2000 subject classifications: Primary 	62G07; secondary 44A12, 62C20.
\end{keywords}

\section{Introduction}\label{sec_intro}

\indent In this work, we consider the estimation of an unknown density $f: \R^d \rightarrow \R_+$, $d\in\N$ with $d\geq2$, in a positron emission tomography (PET) problem. More precisely, we only have access to observations $\left(S_i, U_i\right)$, $i=1,\ldots,n$, with common density $\rho_d^{-1}\radon{f}$, where $\radon{f}$ denotes the Radon transform of $f$ and $\rho_d := (2 \pi)^{d/2}/\Gamma\left(d/2\right)$ with $\Gamma(\cdot)$ denotes the $\Gamma$-function. Thus, the estimation of $f$ given the observations $\left(S_i, U_i\right)$, $i=1,\ldots,n$, can be interpreted as a statistical inverse problem.\\
Next to the PET, the computerised tomography (CT) is an example of tomography related to statistical inverse problems, as explained in Bissantz et al. \cite{bissantz2014}. While the PET problem can be identified as a density-type inverse problem, the CT can be seen as a regression-type inverse problem. 
In both cases, the Radon transform occurs naturally. For a detailed study of the Radon transform and its properties, we refer to Natterer \cite{Natterer1986}. \\
In a CT-type problem with fixed design, Natterer \cite{natterer1980} investigates the reconstruction of the function $f$  over Sobolev spaces, while a random design is considered in Korostelev and Tsybakov \cite{korostelev1993}. A generalisation of the Radon transform, the exponential Radon transform, has been used in Abhishek \cite{Abhishek2023} to construct a kernel-type estimator and proves its minimax-optimality. Further, in Arya and Abhishek \cite{AryaAbhishek2022} a data-driven choice of the upcoming bandwidth is studied.
\\
For the PET setting, we mainly refer to Cavalier \cite{cavalier2000efficient}, where a kernel-type density estimator is proposed and  minimax-optimality is shown. Similar to the PET, Butucea et al. \cite{butucea2007} analyse a quantum tomography version using Wigner functions. Here, the authors consider a PET model with noisy observations of $U_i$, $i=1,\ldots,n$. They also construct a kernel estimator for the Wigner function and prove the minimax-optimality. Additionally, they investigate data-driven choices of the upcoming smoothing parameters.
In this work, we revisit the model and kernel-type estimator considered in Cavalier \cite{cavalier2000efficient}. Our main contributions are the proof of the minimax-optimality of the kernel-type estimator with respect to a different regularity class, the classical Sobolev ellipsoids, and the study of the data-driven choice of the bandwidth. Inspired by Brenner Miguel et al. \cite{Brenner-MiguelComteJohannes2023}, we prove both, an upper and a lower bound of the mean squared error using results from Tsybakov \cite{Tsybakov2008}. In addition, we propose a fully data-driven choice of the upcoming bandwidth of the kernel-type estimator, which faces a logarithmic deterioration compared to the minimax-optimal rate. Thereby, the proof for the upper bound adapts the structure from \cite{Comte2017} and for the lower bound from \cite{Jirak_2014}.\\
The paper is organised as follows. In this section, we continue with an introduction of the Radon transform. Based on the observations $\left(S_i, U_i\right)$, $i=1,\ldots,n$, we introduce the kernel estimator based on the spectral cut-off approach. Section \ref{sec_mm} states an upper bound of the minimax rate over the Sobolev ellipsoids, followed by the corresponding lower bound. In Section \ref{sec_dd} we propose a fully data-driven procedure for the choice of the spectral cut-off parameter. Finally, Section \ref{sec_nu} aims to visualise the results obtained in the previous section using the statistical software R. Proofs of the theorems of sections \ref{sec_mm} and \ref{sec_dd} are given in Appendix \ref{sec_append}.

\paragraph*{Radon transform}Here, we introduce the Radon transform and collect some of its properties. A more detailed introduction can be found in \cite{Natterer1986}. \\
Let us denote the unit sphere of $\R^d$ by $\Sphere^{d-1}:=\{y\in \R^d| \|y\| =1\}$, where $d \in \N$, $\|\cdot\|$ is the Euclidean norm and $\scalarpr{\cdot}{\cdot}$ the Euclidean scalar product.
Further, let  $\Lp{1}(\R^d)$ be the set of all measurable, integrable, complex-valued functions on $\R^d$ endowed with the norm $\Lpn{h}{1}{\R^d}:=\int_{\R^d} |h(y)| \inte\lambda^d(y)$ for $h\in \Lp{1}(\R^d).$ \\
From now on, let $d\in \N$, $d\geq 2$; then we define the Radon transform of $h\in \Lp{1}(\R^d)$ as the function $\radon{h}:\Sphere^{d-1}\times \R \rightarrow \C$ defined by 
\begin{align}\label{eq:radon}
    \radon{h}(s, u):= \int_{\{v\in \R^d| \scalarpr{v}{s}=u\}} f(v) \inte v,\quad \text{for }(s,u) \in \Sphere^{d-1}\times \R,
\end{align}
where the integration is taken over the hyperplane $\{v\in \R^d| \scalarpr{v}{s}=u\}$ with respect to the hypersurface measure. For example, for $d=2$ and $s^{\perp}\in \Sphere^{1}$, such that $\langle s,s^{\perp}\rangle=0$, we have
$$\radon{h}(s, u)= \int_{\R} h\left(us+vs^{\perp}\right) \inte\lambda(v).$$ This parametrisation can be generalised for all $d\geq 2.$
Further, we set $\radonind{s}{h}(u):=\radon{h}(s,u)$  for any fixed point $s \in \Sphere^{d-1}$.
\begin{ex}[Radon transform of a multivariate normal distribution]\label{ex:radon} \phantom{Hello} \\
   Let $\mu\in \R^d$, $\sigma\in \R_+:=(0,\infty)$ and $h(y):=(2\pi\sigma^2)^{-d/2}\exp(-\|y-\mu\|^2/(2\sigma^2)), y\in \R^d$, the density of a multivariate normal distribution. Then, for $(s,u)\in \mathcal Z:=\Sphere^{d-1}\times \R$, we get
   $$\radon{h}(s,u) = \frac{1}{\sqrt{2\pi\sigma^2}} \exp\left(-\frac{|u-\scalarpr{\mu}{s}|^2}{2\sigma^2}\right).$$
   In other words, for $s\in\Sphere^{d-1}$ fixed, $\radonind{s}{h}$ is the density of the normal distribution with mean $\scalarpr{\mu}{s}$ and variance $\sigma^2$ ($\mathrm{N}_{(\scalarpr{\mu}{s}, \sigma^2)}$ for short).
\end{ex}
Next, we study the relationship between the Fourier and the Radon transform. Indeed, let us denote by $\Lp{2}(\R^d)$ the set of all measurable, square-integrable, complex-valued functions on $\R^d$ endowed with the  norm $\Lpn{h}{2}{\R^d}^2:= \int_{\R^d} |h(y)|^2 \inte\lambda^d(y)$ and the corresponding inner product $\scalarpr{\cdot}{\cdot}_{\Lp{2}(\R^d)}$. As usual, we define the Fourier transform as the isomorphism $\mathcal F_d :\Lp{2}(\R^d)\rightarrow \Lp{2}(\R^d)$ given as the extension of $$\Fourier{d}{h}(t):= \int_{\R^d} \exp(i\scalarpr{t}{y}) h(y)\mathrm{d}\lambda^d(y), \quad \text{for } h\in \Lp{1}(\R^d)\cap\Lp{2}(\R^d), t\in \R^d.$$
Then, the so-called projection theorem holds true that is for $h\in \Lp{1}(\R^d)\cap\Lp{2}(\R^d)$, $(s,u)\in \mathcal Z$ that
\begin{align} \label{eq:proj:thm}\Fourier{1}{\radonind{s}{h}}(u) = \Fourier{d}{h}(us).
\end{align}
A detailed proof of \eqref{eq:proj:thm} can be found in \cite{Natterer1986}.
Further, the inverse of the Fourier transform $\mathcal F_d^{-1}: \Lp{2}(\R^d)\rightarrow \Lp{2}(\R^d)$ is explicitly given for $H\in \Lp{1}(\R^d)\cap \Lp{2}(\R^d)$ through $\mathcal F_d^{-1}[H](y)= (2\pi)^{-d} \int_{\R^d} \exp(-i\scalarpr{y}{t}) H(t) \mathrm{d}\lambda^d(t)$ and $(h_1*h_2)(y)=(2\pi)^{-d} \int_{\R^d} \exp(-i\scalarpr{y}{t}) \Fourier{d}{h_1}(t) \Fourier{d}{h_2}(t) \mathrm{d}\lambda^d(t)$ for $h_1,h_2 \in \Lp{1}(\R^d)\cap \Lp{2}(\R^d)$, where $(h_1*h_2)(y):= \int_{\R^d} h_1(y-z)h_2(z)\mathrm{d}\lambda^d(z)$ denotes the usual convolution of two $\Lp{1}(\R^d)$-functions. Further, we will frequently use the following equation
\begin{align}\label{eq:pol:cor}
\int_{\R^d} f(x) \mathrm{d} \lambda^d(x) = \int_0^{\infty} r^{d-1} \int_{\Sphere^{d-1}} f(r\omega) \surfacemeasure{d-1}{\omega}\mathrm{d}r,
\end{align}
where $\sigma^{d-1}$ denotes the surface measure on $\Sphere^{d-1}$. For a detailed discussion of this topic, see \cite[Chapter 7.2]{Natterer1986}. Next, we introduce the estimator presented in \cite{cavalier2000efficient} and discuss possible extensions.
\paragraph*{Estimation strategy} In this work, we study the estimator, proposed by \cite{cavalier2000efficient}, given by 
\begin{align}\label{eq:est}
			\widehat{f}_m(x) &:= \frac{1}{n} \sum_{i=1}^{n} K_m\left(\scalarpr{S_i}{x} - U_i\right), \quad
			K_m := \invFourier{1}{\indikator_{\ball{m}}\frac{1}{(2 \pi)^{d-1}} \frac{\rho_d}{2} \vert \cdot \vert^{d-1}},
\end{align} 
where $m \in \R_+$, $x\in \R^d$ and $\ball{m}:= \{x \in\R^d \vert \|x\| \leq m\}$. Here, $K_m(u)= \rho_d(2\pi)^{-d}\int_0^{m}r^{d-1}\cos(ur)\mathrm{d}r, u\in \R$ and $\rho_d = 2 \pi^{d/2}/\Gamma\left(d/2\right)$. By construction, we have $\EWind{\radon{f}}{\widehat f_m(x)}= \invFourier{d}{\indikator_{\ball{m}}\Fourier{d}{f}}(x)=:f_m(x)$, see \cite{cavalier2000efficient}.
\begin{rem}
The estimator $\widehat f_m(x)$ is a typical kernel-type density estimator in a statistical inverse problem. More precisely, \cite{BelomestnyGoldenshluger2020}, \cite{Brenner-MiguelComteJohannes2023} and \cite{ButuceaComte2009} studied similar constructions. In general, for a family $(\mathcal K_m)_{m\in \R_+}$ such that $(\mathcal K_m|\cdot|^{d-1})_{m\in \R_+}\subseteq \Lp{1}(\R) \cap \Lp{2}(\R)$ and $\lim_{m\rightarrow \infty} \mathcal K_m(t)=1,t\in \R$, we can consider the kernel estimator
$$ \widetilde f_m(x):= \frac{1}{n}\sum_{i=1}^n \widetilde K_m(\scalarpr{S_i}{x}-U_i),\quad \widetilde K_m:= \invFourier{1}{\mathcal K_m\frac{1}{(2 \pi)^{d-1}} \frac{\rho_d}{2} \vert \cdot \vert^{d-1}}$$
with $\mathcal K_m:=\indikator_{[-m,m]}$ being one example. For the sake of simplicity, we stay with this particular choice of $\mathcal K_m$.
\end{rem}
The following result on the upper bound of the mean squared error (MSE) of the estimator $\widehat f_m(x)$ can be found in a similar form in \cite{cavalier2000efficient}. We present the result in a different form and in Section \ref{a:intro} an alternative to the proof structure found in \cite{cavalier2000efficient}.
\begin{prop}\label{prop:mse}
	Let $\Fourier{d}{f} \in \Lp{1}(\R^d)$. Then, for $m \in \R_+$ and $x\in \R^d$ we have
	\begin{align}\label{ineq:mse}
		\EWindlr{\radon{f}}{\left\vert \widehat{f}_m(x) - f(x) \right\vert^2} 
		\leq& \frac{1}{(2\pi)^{2d}}\Lpn{\indikator_{{\ballc{m}}}\Fourier{d}{f}}{1}{\R^d}^2 + \mu_m\frac{m^{2d-1}}{n}, 
	\end{align}
	where $\mu_m:=\EWind{\radon{f}}{mK_1^2(m(\scalarpr{S_1}{x}-U_1))}\leq \rho_d((2\pi)^{2d}(2d-1))^{-1}(\rho_d+\|\Fourier{d}{f}\|_{\Lp{1}(\R^d)})$. 
\end{prop}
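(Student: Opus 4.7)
The plan is the standard bias--variance decomposition
\[
\EWindlr{\radon{f}}{\left|\widehat f_m(x) - f(x)\right|^2} = \left|f_m(x) - f(x)\right|^2 + \Varindlr{\radon{f}}{\widehat f_m(x)},
\]
combined with the unbiasedness $\EWind{\radon{f}}{\widehat f_m(x)} = f_m(x) = \invFourier{d}{\indikator_{\ball{m}}\Fourier{d}{f}}(x)$ recalled in the excerpt. For the bias I write $f(x) - f_m(x) = \invFourier{d}{\indikator_{\ballc{m}}\Fourier{d}{f}}(x)$ and apply the elementary inequality $\|\invFourier{d}{h}\|_\infty \leq (2\pi)^{-d}\Lpnlr{h}{1}{\R^d}$; squaring delivers the first summand of \reff{ineq:mse}.

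For the variance, independence yields $\Varindlr{\radon{f}}{\widehat f_m(x)} \leq n^{-1}\EWindlr{\radon{f}}{K_m^2(Z)}$ with $Z := \scalarpr{S_1}{x} - U_1$. A change of variable in the Fourier representation of $K_m$ produces the scaling identity $K_m(u) = m^d K_1(mu)$, whence $\EWindlr{\radon{f}}{K_m^2(Z)} = m^{2d-1}\mu_m$, so it remains to bound $\mu_m = \EWindlr{\radon{f}}{mK_1^2(mZ)}$ by a constant independent of $m$. Since $\phi_m(u) := mK_1^2(mu)$ is even, $\mu_m = (\phi_m * p_Z)(0)$ with $p_Z$ the density of $Z$, and Parseval delivers $\mu_m = (2\pi)^{-1}\int\Fourier{1}{K_1^2}(t/m)\Fourier{1}{p_Z}(t)\inte t$. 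A direct computation using that $(S_1,U_1)\sim\rho_d^{-1}\radon{f}$, together with the projection theorem \reff{eq:proj:thm}, gives
\[
\Fourier{1}{p_Z}(\alpha) = \rho_d^{-1}\int_{\Sphere^{d-1}}e^{i\alpha\scalarpr{s}{x}}\overline{\Fourier{d}{f}(\alpha s)}\surfacemeasure{d-1}{s}.
\]
Substituting, bounding $|\Fourier{1}{K_1^2}(\cdot)| \leq \Fourier{1}{K_1^2}(0) = \Lpn{K_1}{2}{\R}^2$, exploiting that $\Fourier{1}{K_1^2}$ is supported in $[-2,2]$, and passing to polar coordinates $y = \tau s$, $\tau > 0$ (so that $\inte\tau\surfacemeasure{d-1}{s} = |y|^{-(d-1)}\Lebesgue{d}{y}$) then yield
\[
|\mu_m| \leq \frac{\Lpn{K_1}{2}{\R}^2}{\pi\rho_d}\int_{\ball{2m}}\frac{|\Fourier{d}{f}(y)|}{|y|^{d-1}}\Lebesgue{d}{y},
\]
while Plancherel evaluates $\Lpn{K_1}{2}{\R}^2 = \pi\rho_d^2((2\pi)^{2d}(2d-1))^{-1}$.

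The main obstacle is taming the non-integrable weight $|y|^{-(d-1)}$ at the origin, which I resolve through the dichotomy $\ball{2m} = \ball{1}\cup(\ball{2m}\setminus\ball{1})$. On $\ball{1}$, the trivial pointwise estimate $|\Fourier{d}{f}|\leq \Lpnlr{f}{1}{\R^d} = 1$ combined with the elementary polar identity $\int_{\ball{1}}|y|^{-(d-1)}\Lebesgue{d}{y} = \rho_d$ controls the contribution by $\rho_d$; on $\ball{2m}\setminus\ball{1}$, the bound $|y|^{-(d-1)}\leq 1$ controls it by $\Lpnlr{\Fourier{d}{f}}{1}{\R^d}$. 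Adding the two and multiplying by the prefactor $\Lpn{K_1}{2}{\R}^2/(\pi\rho_d) = \rho_d((2\pi)^{2d}(2d-1))^{-1}$ produces the announced estimate on $\mu_m$.
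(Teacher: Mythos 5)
Your proof is correct and follows essentially the same route as the paper's: after the identical bias–variance split, the same scaling identity $K_m=m^dK_1(m\cdot)$, and a Parseval/projection-theorem step that converts $\mu_m$ into a Fourier-domain integral over $\Sphere^{d-1}\times\R$, you bound $|\Fourier{1}{K_1^2}|$ by $\Lpn{K_1}{2}{\R}^2$ and split the remaining weighted integral into the unit ball (where $|\Fourier{d}{f}|\le 1$ gives $\rho_d$) and its complement (where the weight is $\le 1$, giving $\Lpn{\Fourier{d}{f}}{1}{\R^d}$), exactly the paper's $(1\vee t^{d-1})$ trick in polar form. The only cosmetic differences are that you frame the key step via the characteristic function of $Z$ and note the compact support of $\Fourier{1}{K_1^2}$ on $[-2,2]$, which is correct but unnecessary since your final bound does not actually use it.
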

Let us briefly comment on the last result. The first summand of the right-hand side of the inequality \eqref{ineq:mse} is a typical bound for the squared bias term $|f(x)-f_m(x)|^2$ in the context of local kernel deconvolution estimators, compare \cite{Meister2009}. The second summand, the bound of the variance $\Varind{\radon{f}}{\widehat f_m(x)}$, is characterized by the term $m^{2d-1}n^{-1}$ as the expectations $\mu_m$ are bounded over $m\in \R_+$. More precisely, for any sequence $(m_n)_{n\in \N}$ with $m_n^{2d-1}n^{-1}\rightarrow 0$ and $m_n\rightarrow \infty$ for $n$ going to infinity, we deduce that $\EWind{\radon{f}}{\vert \widehat{f}_{m_n}(x) - f(x) \vert^2}\rightarrow 0$. In other words, $\widehat f_{m_n}(x)$ is a consistent estimator of $f(x)$. Nevertheless, without any further assumption of the density $f$, the rate of convergence of the MSE can be arbitrary slow. In the following section, we consider a regularity assumption expressed through Sobolev spaces to address this issue.
\section{Minimax theory}\label{sec_mm}
In this section, we introduce and study Sobolev ellipsoids and show the minimax optimality of our estimator over these function classes. Similar results for other statistical inverse problems can be found in \cite{ButuceaComte2009} for an additive noise model and in \cite{Brenner-MiguelComteJohannes2023} for the model of multiplicative measurement errors.\\
In contrast to the Sobolev class, a class of exponentially decaying Fourier transforms has been considered in \cite{cavalier2000efficient} leading to almost parametric rates. Similar to \cite{cavalier2000efficient}, we start by stating a rate of convergence over the Sobolev ellipsoids and show its minimax optimality by stating a lower bound of the minimax risk.\\
Indeed, let us consider for $L$, $\beta \in \R_+$ the Sobolev ellipsoids $\Sobolev{\beta}{L}$ defined by
\begin{align}\label{eq:sobo:el}
	\Sobolev{\beta}{L} &:= \sets{ f \in \Lp{2}\left(\R^d\right) }{\int_{\R^d} \left( 1 + \| t \|^2 \right)^{\beta} \left\vert \Fourier{d}{f}(t)\right\vert^2 \Lebesgue{d}{t} \leq L}.
\end{align}
If $f\in \Sobolev{\beta}{L}$, with $\beta>d/2$, we have $\Fourier{d}{f} \in \Lp{1}(\R^d)$ and, by application of the Cauchy-Schwarz inequality, we get
$$\Lpn{\indikator_{\ballc{m}} \Fourier{d}{f}}{1}{\R^d}^2 = \left|\int_{\ballc{m}} \left|\Fourier{d}{f}(t)\right|\frac{(1+\|t\|^2)^{\beta/2}}{(1+\|t\|^2)^{\beta/2}}\mathrm{d}\lambda^d(t)\right|^2 \leq C(L,\beta,d) m^{-2\beta+d},$$
where $C(L,\beta,d)>0$ is a positive constant depending on $L$, $\beta$ and $d$. 
Now, we can show the following Corollary, which is a direct consequence of Proposition \ref{prop:mse}. Its proof is thus omitted.
\begin{cor}[Upper bound on the minimax risk]\label{cor:upp:bou}
    Let $\beta>d/2$ and $L\in\R_+$. Then for $m_o:= n^{1/(2\beta+d-1)} $, we get
    $$\sup_{f\in \Sobolev{\beta}{L}} \EWindlr{\radon{f}}{\left\vert \widehat{f}_{m_o}(x) - f(x) \right\vert^2}  \leq C(L, \beta, d) n^{-\frac{2\beta-d}{2\beta+d-1}}.$$
\end{cor}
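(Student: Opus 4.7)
The plan is to combine Proposition \ref{prop:mse} with the bias estimate already carried out in the paragraph preceding the statement, and then to balance the two resulting terms by an appropriate choice of $m$. For any $f \in \Sobolev{\beta}{L}$ with $\beta > d/2$, the Cauchy-Schwarz inequality displayed just above the Corollary yields $\Lpn{\indikator_{\ballc{m}}\Fourier{d}{f}}{1}{\R^d}^2 \leq C_1(L,\beta,d)\, m^{d-2\beta}$, where the integrability of $\int_{\ballc{m}}(1+\|t\|^2)^{-\beta}\inte\lambda^d(t)$ and its scaling of order $m^{d-2\beta}$ use precisely $\beta > d/2$ via the polar representation \eqref{eq:pol:cor}. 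The same Cauchy-Schwarz split, applied without the indicator, shows that $\Lpn{\Fourier{d}{f}}{1}{\R^d}$ is bounded uniformly over the ellipsoid, so the constant $\mu_m$ controlling the variance in Proposition \ref{prop:mse} is bounded by some $C_2(L,\beta,d)$ that depends neither on $m$ nor on the particular $f$.

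Combining these two ingredients, for every $f\in \Sobolev{\beta}{L}$ and every $m\in \R_+$,
\[
\EWindlr{\radon{f}}{|\widehat{f}_m(x) - f(x)|^2} \leq C_3(L,\beta,d)\left( m^{d-2\beta} + \frac{m^{2d-1}}{n}\right).
\]
The next step is to optimise in $m$ by balancing the two summands. Setting $m^{d-2\beta} = m^{2d-1}/n$ gives the exponent relation $n = m^{(2d-1)-(d-2\beta)} = m^{2\beta+d-1}$, hence the choice $m_o = n^{1/(2\beta+d-1)}$ stated in the Corollary. Substituting $m_o$ into the bias term yields $m_o^{d-2\beta} = n^{-(2\beta-d)/(2\beta+d-1)}$, and the variance term is of the same order. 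Taking the supremum over $f\in \Sobolev{\beta}{L}$ on the left-hand side completes the argument.

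There is no substantive obstacle here: the whole proof is a textbook bias-variance balancing once the Cauchy-Schwarz bound on the tail of $\Fourier{d}{f}$ is in place. The only point requiring care is the uniformity over the ellipsoid of both constants, and this is automatic because $C_1(L,\beta,d)$ and $C_2(L,\beta,d)$ depend only on $L$, $\beta$ and $d$. This is precisely why the Corollary can be stated as a direct consequence of Proposition \ref{prop:mse} with its proof omitted.
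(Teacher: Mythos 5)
Your argument is correct and is precisely the bias-variance balancing the paper has in mind when it calls the Corollary "a direct consequence of Proposition \ref{prop:mse}" and omits the proof: the squared-bias bound $C(L,\beta,d)m^{d-2\beta}$ is exactly the display preceding the Corollary, and the uniform boundedness of $\mu_m$ over $\Sobolev{\beta}{L}$ (hence of the variance constant) follows as you note from $\beta>d/2$. Setting the two terms equal yields $m_o = n^{1/(2\beta+d-1)}$ and the stated rate, so the reconstruction matches the paper's intent.
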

In Corollary \ref{cor:upp:bou}, we have seen that the estimator $\widehat f_{m_o}(x)$ achieves a rate of $n^{-(2\beta-d)/(2\beta+d-1)}$ uniformly over the ellipsoid $\Sobolev{\beta}{L}$. To show that this rate cannot be improved by any estimator based on the sample $\left(\left(S_i, U_i\right)\right)_{i\in \llbracket n\rrbracket}$, where $\llbracket n \rrbracket := [1,n] \cap \N $, we show the following lower bound.
\begin{thm}[Lower bound on the minimax risk]\label{thm:low:bou}
	Let $x\in\R^d$ and let $L$, $\beta \in \R_+$. Then, there exist constants $L_{\beta, d}>0$ and $c\left(L, \beta, d\right) > 0$ depending on $\beta$, $d$, respectively  $L$, $\beta$, $d$, such that for all $L \geq L_{\beta, d}$ and for any estimator $\widehat{f}\left(x\right)$ of $f\left(x\right)$ based on an i.i.d. sample $\left(\left(S_i, U_i\right)\right)_{i\in \llbracket n\rrbracket}$ it holds that
	\begin{align*}
		\sup_{f \in \Sobolev{\beta}{L}} \EWindlr{\radon{f}}{\left|\widehat{f}\left(x\right) - f\left(x\right)\right|^2} \geq c(L, \beta, d) n^{-\frac{2\beta-d}{2\beta+d-1}}.
	\end{align*}
\end{thm}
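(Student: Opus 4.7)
The plan is to apply a two-hypothesis Le Cam reduction in the form of Theorem 2.2 of Tsybakov \cite{Tsybakov2008}, which is the same strategy referenced in \cite{Brenner-MiguelComteJohannes2023}. I would fix a smooth, compactly supported bump $\psi : \R^d \to \R$ with $\int \psi\,\mathrm{d}\lambda^d = 0$ and $\psi(0) \neq 0$, and a fixed smooth reference density $f_0$ on $\R^d$ whose Radon transform $\radonind{s}{f_0}(u) = \radon{f_0}(s,u)$ is bounded from above and below by positive constants on the region of interest (for instance a suitably normalised bump such that $\inf_{(s,u) \in \mathcal{Z}'} \radon{f_0}(s,u) > 0$ on the $(s,u)$-set where the perturbations below will be supported). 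Then, for $h_n := c\,n^{-1/(2\beta+d-1)}$ with $c>0$ a constant to be tuned, and $a_n := h_n^{(2\beta-d)/2}$, I define the competing hypothesis
\begin{align*}
f_1(y) := f_0(y) + a_n\, \psi\!\left(\tfrac{y-x}{h_n}\right), \qquad y \in \R^d,
\end{align*}
which is still a density for $n$ large (since $a_n \to 0$ and $\psi$ is bounded with $\int \psi = 0$).

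Next I verify the two standard ingredients. For the separation at the point $x$, since $\psi(0) \neq 0$,
\begin{align*}
|f_1(x)-f_0(x)|^2 = a_n^2\, \psi(0)^2 \asymp n^{-(2\beta-d)/(2\beta+d-1)},
\end{align*}
which matches the claimed rate. For the Sobolev constraint, a change of variables gives $\Fourier{d}{f_1-f_0}(t) = a_n h_n^d\, \Fourier{d}{\psi}(h_n t)\exp(i\langle t,x\rangle)$, so
\begin{align*}
\int_{\R^d}(1+\|t\|^2)^{\beta}|\Fourier{d}{f_1-f_0}(t)|^2 \Lebesgue{d}{t} = a_n^2 h_n^{d-2\beta}\!\int_{\R^d} (h_n^{2} + \|\tau\|^2)^{\beta}|\Fourier{d}{\psi}(\tau)|^2 \Lebesgue{d}{\tau},
\end{align*}
which is bounded by a constant $C_{\psi,\beta,d}$ because $a_n^2 h_n^{d-2\beta} = 1$ by construction and $\psi$ is Schwartz. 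Choosing $f_0$ with $\|f_0\|^2_{\mathcal{W}^{\beta}} \leq L/2$ and $L$ larger than some $L_{\beta,d}$ absorbing $C_{\psi,\beta,d}$ ensures $\{f_0, f_1\} \subset \Sobolev{\beta}{L}$.

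The main technical step is to control the product-measure divergence between the i.i.d. observations under $f_0$ and $f_1$. Using the scaling of the Radon transform,
\begin{align*}
\radon{f_1 - f_0}(s,u) = a_n h_n^{d-1}\, \radon{\psi}\!\left(s, \tfrac{u-\langle s,x\rangle}{h_n}\right),
\end{align*}
and bounding the chi-square discrepancy between the two observation densities on $\mathcal{Z} = \Sphere^{d-1}\times \R$,
\begin{align*}
\chi^2\!\left(\rho_d^{-1}\radon{f_1},\rho_d^{-1}\radon{f_0}\right) = \int_{\mathcal Z} \frac{(\radon{f_1-f_0}(s,u))^2}{\rho_d\, \radon{f_0}(s,u)}\surfacemeasure{d-1}{s}\mathrm{d}u,
\end{align*}
a change of variables $w = (u-\langle s,x\rangle)/h_n$ and the positive lower bound on $\radon{f_0}$ yield
\begin{align*}
\chi^2 \leq C\, a_n^2 h_n^{2(d-1)} h_n \int_{\Sphere^{d-1}}\!\!\int_\R \radon{\psi}^2(s,w)\,\mathrm{d}w\,\surfacemeasure{d-1}{s} = C'\, a_n^2 h_n^{2d-1} = C'\, h_n^{2\beta+d-1}.
\end{align*}
Since the observations are i.i.d., $\mathrm{KL}\bigl(P_{f_0}^{\otimes n},P_{f_1}^{\otimes n}\bigr) \leq n\,\chi^2 \leq C' n h_n^{2\beta+d-1} = C'c^{2\beta+d-1}$, which is made smaller than $\alpha < 1/8$ by choosing $c$ small. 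Applying Theorem 2.2 of \cite{Tsybakov2008} to the two-point family $\{f_0, f_1\}$ with semi-distance $d(f,g) = |f(x)-g(x)|$ then produces the asserted lower bound $c(L,\beta,d)\,n^{-(2\beta-d)/(2\beta+d-1)}$.

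The main obstacle I anticipate is the control of the chi-square integral: one must ensure that the reference density $f_0$ is chosen so that $\radon{f_0}$ is strictly positive on the support of $\radon{\psi((\cdot - x)/h_n)}$ uniformly in $n$, and simultaneously lies in $\Sobolev{\beta}{L/2}$. This is achievable by taking $f_0$ to be, e.g., a normalised smooth bump whose support contains a neighbourhood of $x$; Example \ref{ex:radon} suggests that a Gaussian recentred at $x$ could serve as a convenient explicit choice, with truncation arguments or a compactly supported smooth density to ensure the Sobolev constraint.
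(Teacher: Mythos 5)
Your proposal follows essentially the same approach as the paper: a two-hypothesis Le Cam reduction via Theorem 2.2 of Tsybakov, with $f_1$ obtained by adding a small localised bump of amplitude $\delta h^{\beta-d/2}$ to a fixed reference density $f_0$, separation checked at $x$, the Sobolev constraint verified by the scaling $a_n^2 h_n^{d-2\beta}=1$, and the $\chi^2$-divergence of the induced Radon densities bounded of order $h^{2\beta+d-1}$. The paper resolves the choice of $f_0$ that you flag as the anticipated obstacle exactly as you suggest: it takes $f_0$ to be the Gaussian $\mathrm N(x,I_d)$, uses Example \ref{ex:radon} to get $\radon{f_0}(s,u)=f^\ast(u-\langle x,s\rangle)$ explicitly, and exploits that the radial bump $\psi(\|y-x\|/h)$ with $\mathrm{supp}(\psi)\subseteq[0,1]$ forces $\radonind{s}{\Psi_{h,x}}$ to be supported in $[\langle x,s\rangle-1,\langle x,s\rangle+1]$ for all $h\le 1$, giving the uniform lower bound $\radon{f_0}\ge f^\ast(1)$ there; the only cosmetic difference is that the paper uses a one-dimensional radial profile $\psi(\|\cdot\|/h)$ with the moment condition $\int_0^1 r^{d-1}\psi(r)\,\mathrm dr=0$ rather than a generic $d$-dimensional bump with $\int\psi=0$, which is what makes the polar-coordinate check that $f_1$ integrates to one immediate.
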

Theorem \ref{thm:low:bou} implies that the rate of $n^{-(2\beta-d)/(2\beta+d-1)}$ is due to the ill-posedness of the underlying inverse problem rather a property of our proposed estimator. Moreover, we have shown that the estimator $\widehat f_{m_o}(x)$ is minimax-optimal with respect to the class $\Sobolev{\beta}{L}$. Nevertheless, the choice $m_o=n^{1/(2\beta+d-1)}$ in Corollary \ref{cor:upp:bou} is still dependent on the regularity of the unknown density $f$ which is again unknown. \\
In the next section, we therefore aim to present a fully data-driven choice of the parameter $m\in \R_+$ based only on the observations $((S_i,U_i))_{i\in \llbracket n \rrbracket}$.
\section{Data-driven method}\label{sec_dd}
In this section, we state a data-driven procedure for the choice of the parameter $m\in \R_+$ inspired by the work  \cite{GoldenshlugerLepski2011}. More precisely, for $x\in \R^d$, we  propose a data-driven choice of $m\in \R_+$ based only on the observations $((S_i, U_i))_{i\in \llbracket n \rrbracket}$. \\
In fact, let us reduce the set of eligible dimension parameters to
\begin{align}\label{eq:Mn}
	\Mn &:= \sets{m \in \N}{m \leq n^{\frac{1}{2d-1}}} \text{ and set } \mathrm{M}_n = \max \Mn.
\end{align}
We define for a numerical constant $\chi>0$ and $m\in \Mn$ the function $V(m) := \chi (1+\mu_m) m^{2d-1} \log(n)/n$ which mimics the variance term up to a logarithmic factor. Furthermore, we set
\begin{align*}
	A(m) := \hspace*{-0.2cm}\sup_{m^{\prime} \in \Mn} \left(\left|\widehat{f}_{m^{\prime} \wedge m}(x) - \widehat{f}_{m^{\prime}}(x)\right|^2 - V\left(m^{\prime}\right)\right)_+.
 \end{align*}
As the quantity $(\mu_m)_{m\in \R_+}$ is unknown, we replace it with its empirical counterpart $\widehat\mu_m:=n^{-1} \sum_{j\in \llbracket n\rrbracket} mK_1^2(m(\scalarpr{S_j}{x}-U_j))$ and define $\widehat V(m) := 2\chi (1+\widehat\mu_m)m^{2d-1} \log(n)/n$ and
\begin{align}\label{def:hVhA}
	\widehat A(m) := \hspace*{-0.2cm}\sup_{m^{\prime} \in \Mn} \left(\left|\widehat{f}_{m^{\prime} \wedge m}(x) - \widehat{f}_{m^{\prime}}(x)\right|^2 - \widehat V\left(m^{\prime}\right)\right)_+.
 \end{align}
For abbreviation, we write $A(m)$, $\widehat{A}(m)$, $V(m)$, $\widehat{V}(m)$ and $\hat{\mu}_m$ instead of $A(m,x)$, $\widehat{A}(m,x)$, $V(m,x)$, $\widehat{V}(m, x)$ and $\hat{\mu}_m(x)$, respectively, although it depends on $x\in \R^d$.
Below we study the estimator $\widehat f_{\widehat m}(x)$ with the cut-off parameter selected by
\begin{align}\label{eq:adap:m}
	\widehat{m} &:= \argmin_{m \in \Mn} \left\{\widehat A(m) + \widehat V(m)\right\},
\end{align}
which depends only on observable quantities and hence $
\widehat f_{\widehat m}(x)$  is completely data-driven. Note that we only write $\widehat{m}$ instead of $\widehat{m}(x)$ to simplify notation.

\begin{thm}[Data-driven choice of $m\in \R_+$] \label{theo:adap} Let $\Fourier{d}{f}\in \Lp{1}(\R^d)$. Then for all $\chi\geq 24$
	\begin{align*}
	\EWindlr{\radon{f}}{\left|\widehat f_{\widehat m}(x)-f(x)\right|^2} \leq 16\inf_{m\in \Mn}\left(\frac{\|\mathds{1}_{\ballc{m}}\Fourier{d}{f}\|^2_{\Lp{1}(\R^d)}}{(2\pi)^{2d}} +V(m)\right) + \frac{C(f,\chi, d)}{n},
	\end{align*}
 where $C(f,\chi, d)>0$ is a positive constant depending on $\Lpn{\Fourier{d}{f}}{1}{\R^d},\chi$ and $d$.
\end{thm}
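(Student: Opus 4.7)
The plan is to adapt the Goldenshluger--Lepski scheme, carrying the data-driven quantities $\widehat A,\widehat V$ through the main decomposition and transferring to the deterministic $A,V$ only at the expectation step, on a well-chosen high-probability event. Throughout, write $\xi_m(x):=\widehat f_m(x)-f_m(x)$ with $f_m(x):=\invFourier{d}{\indikator_{\ball{m}}\Fourier{d}{f}}(x)$.

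\emph{Step 1 (data-driven decomposition).} For any $m\in\Mn$, Young's inequality gives
\begin{align*}
|\widehat f_{\widehat m}(x)-f(x)|^2
&\leq 3|\widehat f_m(x)-f(x)|^2+3|\widehat f_{\widehat m\wedge m}(x)-\widehat f_{\widehat m}(x)|^2\\
&\quad +3|\widehat f_{\widehat m\wedge m}(x)-\widehat f_m(x)|^2.
\end{align*}
By the very definition of $\widehat A$, taking $m'=\widehat m$ in $\widehat A(m)$ and $m'=m$ in $\widehat A(\widehat m)$ bounds the two cross terms by $\widehat A(m)+\widehat V(\widehat m)$ and $\widehat A(\widehat m)+\widehat V(m)$ respectively. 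The minimality $\widehat A(\widehat m)+\widehat V(\widehat m)\leq\widehat A(m)+\widehat V(m)$ coming from \eqref{eq:adap:m} then yields the pleasant bound
\begin{align*}
|\widehat f_{\widehat m}(x)-f(x)|^2\leq 3|\widehat f_m(x)-f(x)|^2+6\widehat A(m)+6\widehat V(m).
\end{align*}

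\emph{Step 2 (transfer $\widehat V\leftrightarrow V$ on a good event).} Introduce
$\Omega:=\{|\widehat\mu_m-\mu_m|\leq (1+\mu_m)/2\text{ for all }m\in\Mn\}$. The prefactor $2$ in $\widehat V(m)=2\chi(1+\widehat\mu_m)m^{2d-1}\log(n)/n$ is precisely tuned so that on $\Omega$ one has $V(m)\leq\widehat V(m)\leq 3V(m)$ uniformly in $m\in\Mn$; consequently $\widehat A(m)\leq A(m)$ on $\Omega$, since replacing the penalty by a larger one shrinks the supremum in the definition of $A$. Restricting Step~1 to $\Omega$ and taking expectations thus gives
\begin{align*}
\EWindlr{\radon{f}}{|\widehat f_{\widehat m}(x)-f(x)|^2\indikator_\Omega}
&\leq 3\EWindlr{\radon{f}}{|\widehat f_m(x)-f(x)|^2}\\
&\quad +6\EWindlr{\radon{f}}{A(m)}+18V(m).
\end{align*}

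\emph{Step 3 (bias--stochastic split of $A(m)$).} Writing $\widehat f_{m'\wedge m}-\widehat f_{m'}=(f_{m'\wedge m}-f_{m'})+\xi_{m'\wedge m}-\xi_{m'}$ and observing that $f_{m'\wedge m}(x)-f_{m'}(x)=0$ for $m'\leq m$ while $|f_{m'\wedge m}(x)-f_{m'}(x)|\leq (2\pi)^{-d}\Lpn{\indikator_{\ballc{m}}\Fourier{d}{f}}{1}{\R^d}$ otherwise (using $\ball{m'}\setminus\ball{m'\wedge m}\subseteq \ballc{m}$), a further Young split yields
\begin{align*}
A(m)\leq \frac{3\Lpn{\indikator_{\ballc{m}}\Fourier{d}{f}}{1}{\R^d}^2}{(2\pi)^{2d}}+6\sup_{m'\in\Mn}\bigl(\xi_{m'}^2(x)-V(m')/6\bigr)_+.
\end{align*}

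\emph{Step 4 (Bernstein and control of $\Omega^c$).} The scaling identity $K_{m'}(u)=(m')^d K_1(m'u)$ yields $\|K_{m'}\|_\infty\leq \rho_d(m')^d/((2\pi)^d d)$ and $\EWind{\radon{f}}{K_{m'}^2(\scalarpr{S_1}{x}-U_1)}=(m')^{2d-1}\mu_{m'}$. Bernstein's inequality applied to the centred average $\xi_{m'}$ and summed over $m'\in\Mn$ delivers
$\sum_{m'\in\Mn}\EW{(\xi_{m'}^2(x)-V(m')/6)_+}=\mathcal O(1/n)$ as soon as $\chi\geq 24$, the sub-Gaussian factor $n^{-\chi/12}$ dominating the cardinality $|\Mn|\leq n^{1/(2d-1)}$. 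The same Bernstein tool applied to $\widehat\mu_m-\mu_m$ (whose summands are bounded by a multiple of $m\leq n^{1/(2d-1)}$) gives $\PP{\Omega^c}=\mathcal O(n^{-k})$ for arbitrarily large $k$, so that the coarse bound
$\EWindlr{\radon{f}}{|\widehat f_{\widehat m}(x)-f(x)|^2\indikator_{\Omega^c}}\leq 4\|K_{M_n}\|_\infty^2\,\PP{\Omega^c}\lesssim n^{2d/(2d-1)-k}$
is of order $1/n$. Combining Steps~1--4 with Proposition~\ref{prop:mse} applied to $\EWindlr{\radon{f}}{|\widehat f_m(x)-f(x)|^2}$ and taking the infimum over $m\in\Mn$ produces the announced oracle inequality.

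\emph{Main obstacle.} The heart of the argument is the joint calibration of $\chi$: the same constant must make the Bernstein tail of $\widehat\mu_m-\mu_m$ polynomially small enough to absorb the coarse bound $\|K_{M_n}\|_\infty^2\sim n^{2d/(2d-1)}$ on $\Omega^c$, while also ensuring geometric summability over $\Mn$ of the Bernstein tail of the $\xi_{m'}$'s. The factor $2$ baked into the definition of $\widehat V$, together with the threshold $\chi\geq 24$, is precisely what makes both probabilistic estimates valid on the same event and produces the numerical constant $16$ in the final bound.
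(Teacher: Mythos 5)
Your plan is a genuine alternative route, but it contains a quantitative gap that invalidates the claimed threshold $\chi\geq 24$. The paper does \emph{not} introduce a good event $\Omega$ at all: it instead uses the pointwise inequality $\widehat A(m)\leq A(m)+\max_{m'\in\Mn}(V(m')-\widehat V(m'))_+$ together with the exact identity $\EWind{\radon{f}}{\widehat V(m)}=2V(m)$, so that the random penalty is controlled in expectation via a $p$-th moment inequality for $\widehat\mu_{m'}-\mu_{m'}$ (Petrov), never through an $\{\Omega,\Omega^c\}$ split. Your approach, restricting to $\Omega=\{|\widehat\mu_m-\mu_m|\leq(1+\mu_m)/2,\;\forall m\}$ and paying for $\Omega^c$ with a coarse bound of order $\|K_{\mathrm{M}_n}\|_\infty^2\,\PP{\Omega^c}$, is also a standard Goldenshluger--Lepski variant and would yield an oracle inequality of the same form — but with larger numerical constants than the stated $16$, because your initial Young split carries a $3$ (correct for a three-term sum) and the transfer $\widehat V\leq 3V$ on $\Omega$ inflates the penalty by another factor $3$, whereas the paper's trick of writing $4\widehat V(m)=4(\widehat V(m)-2V(m))+8V(m)$ and killing the first term in expectation costs nothing.

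The concrete gap is in Steps 3--4. After splitting $(3\xi_{m'}^2+3\xi_m^2-V(m'))_+$ symmetrically into two halves, you end up thresholding against $V(m')/6$, so the Bernstein sub-Gaussian exponent becomes $\exp(-nV(m')/(24v^2))=n^{-\chi/24}$, not the $n^{-\chi/12}$ you write; with $\chi=24$ this is only $n^{-1}$, which does \emph{not} absorb the cardinality $|\Mn|\asymp n^{1/(2d-1)}$ after summation. You would need $\chi\geq 48$ (or $\chi\geq 12(2+2/(2d-1))$ at best) for your decomposition to close. The paper avoids this by treating the two fluctuation terms asymmetrically: it writes $3(\xi_{m'}^2+\xi_m^2-V(m')/3)_+\leq 3(\xi_{m'}^2-V(m')/3)_+ + 3(\xi_m^2-V(m)/3)_+ + V(m)$, keeping the threshold at $V(m')/3$ and extracting the extra $V(m)$ as a deterministic remainder. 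Adopting this asymmetric split would repair your argument and restore $\chi\geq 24$, at which point the good-event route would work end to end (modulo the larger oracle constant). Everything else in your plan — the verification that $\widehat V\geq V$ on $\Omega$, the superpolynomial smallness of $\PP{\Omega^c}$ from Bernstein with $b,v^2\lesssim m\leq n^{1/(2d-1)}$, and the coarse bound $|\widehat f_{\widehat m}(x)|\leq\|K_{\mathrm{M}_n}\|_\infty\lesssim n^{d/(2d-1)}$ using boundedness of $f$ under $\Fourier{d}{f}\in\Lp{1}(\R^d)$ — is sound.
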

It is worth emphasising that the class $\Mn$ is rich enough in the sense  that the data-driven estimator $\widehat f_{\widehat m}(x)$ achieves the minimax rate up to a logarithmic factor. Indeed, for $\beta> d/2$ we have $m_o= (n/\log(n))^{1/(2\beta+d-1)} \leq n^{1/(2d-1)}$ which implies with Theorem \ref{theo:adap} the following Corollary.
\begin{cor}\label{cor:upp:bou:adap}
    Let $x\in \R^d$, $L\in\R_+$ and $\beta>d/2$. Then, there exists a constant $C(L,\beta, d)>0$ depending on $L$, $\beta$ and $d$ such that
    $$\sup_{f\in \Sobolev{\beta}{L}} \EWindlr{\radon{f}}{\left|\widehat f_{\widehat m}(x)-f(x)\right|^2} \leq C(L, \beta, d) \left(\frac{n}{\log(n)}\right)^{-\frac{2\beta-d}{2\beta+d-1}}.$$
\end{cor}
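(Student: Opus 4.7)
The plan is to deduce Corollary \ref{cor:upp:bou:adap} directly from Theorem \ref{theo:adap} by evaluating the infimum over $m \in \Mn$ uniformly in $f \in \Sobolev{\beta}{L}$, in complete analogy with the way Corollary \ref{cor:upp:bou} was read off from Proposition \ref{prop:mse}.

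First, I would recycle the Cauchy--Schwarz estimate used before Corollary \ref{cor:upp:bou}: for $f \in \Sobolev{\beta}{L}$ with $\beta > d/2$ it delivers the bias control $\|\indikator_{\ballc{m}}\Fourier{d}{f}\|^2_{\Lp{1}(\R^d)} \leq C_1(L,\beta,d)\, m^{-2\beta+d}$, and applied with $\R^d$ in place of $\ballc{m}$ it also gives the uniform bound $\|\Fourier{d}{f}\|_{\Lp{1}(\R^d)} \leq C_2(L,\beta,d)$. Plugging the second bound into the explicit estimate for $\mu_m$ from Proposition \ref{prop:mse} shows $\mu_m \leq C_3(L,\beta,d)$ uniformly over the ellipsoid, so $V(m) \leq C_4(L,\beta,d)\, m^{2d-1}\log(n)/n$; the same uniform $\Lp{1}$-bound turns the residual constant $C(f,\chi,d)$ appearing in Theorem \ref{theo:adap} into a constant $C_5(L,\beta,d,\chi)$ valid for every $f$ in $\Sobolev{\beta}{L}$.

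Next, I would choose the balancing cut-off $m_o := \lfloor (n/\log(n))^{1/(2\beta+d-1)} \rfloor$, which equates the two deterministic upper bounds up to multiplicative constants. Since $\beta > d/2$ implies $2\beta+d-1 > 2d-1$, one has $m_o \leq n^{1/(2d-1)}$ for all $n$ large enough, and therefore $m_o \in \Mn$. Substituting $m_o$ into the infimum in Theorem \ref{theo:adap} then gives a bound of order $(n/\log(n))^{-(2\beta-d)/(2\beta+d-1)}$, which dominates the $C_5/n$ remainder; taking the supremum over $\Sobolev{\beta}{L}$ yields the announced rate.

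No step is a real obstacle. The only delicate checks are that all implicit constants are genuinely independent of the particular $f$ and that the rounded minimiser $m_o$ actually lies in the admissible grid $\Mn$. Both reductions rest solely on the assumption $\beta > d/2$, which is the same regularity condition already exploited in Corollary \ref{cor:upp:bou}.
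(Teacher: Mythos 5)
Your proposal is correct and follows essentially the same route as the paper: plug $m_o \asymp (n/\log n)^{1/(2\beta+d-1)}$ into the oracle inequality of Theorem \ref{theo:adap}, using $\beta > d/2$ to guarantee both $m_o \in \Mn$ and the integrability of $\Fourier{d}{f}$. You simply make explicit two points the paper leaves to the reader, namely that $\|\Fourier{d}{f}\|_{\Lp{1}(\R^d)}$ (and hence $\mu_m$, $V(m)$, and the residual constant $C(f,\chi,d)$) is bounded uniformly over $\Sobolev{\beta}{L}$ via Cauchy--Schwarz, and that one should take the integer part so that $m_o$ actually lies in the grid $\Mn$.
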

\begin{rem}
    Even if we assume that the density $f$ belongs to $\Sobolev{\beta}{L}$ with $\beta> d/2$ and $L\in\R_+$, 
    the convergence rate in the adaptive procedure deteriorates by a ($\log(n)$)-factor compared to the optimal pointwise rate under the same assumption, cf. Corollary \ref{cor:upp:bou} and Theorem \ref{thm:low:bou}. However, this is a typical phenomenon in the literature, see \cite{BUTUCEA2000} and \cite{butucea2001}.
\end{rem}

To show that the logarithmic loss in the convergence rate in Corollary \ref{cor:upp:bou:adap} is unavoidable, we state a lower bound as well. 

\begin{thm}[Lower bound for data-driven estimators]\label{thm:low:adap}
    Let $x \in\R^d$ and $\beta>d/2$. Then, there exists a constant $L_{\beta, d}> 0$ depending on $\beta$, $d$ such that for all $L\geq L_{\beta, d}$ holds:
    If a sequence of estimators $\{\widehat f_n(x)\}_{{n\in \N}}$ of $f(x)$ based on the data $((S_i,U_i))_{i \in \llbracket n \rrbracket}$  satisfies
    $$\sup_{n\in \N} \sup_{f \in \Sobolev{\beta}{L}} \EWindlr{\radon{f}}{\left\vert \widehat f_n(x) - f(x) \right\vert^2} n^{\frac{2\beta-d}{2\beta+d-1}}\leq \mathfrak C $$ for a positive constant $\mathfrak C>0$, then for any $\beta' \in (d/2, \beta) $ there exists a constant $\mathfrak c=\mathfrak c(\mathfrak C, \beta, \beta', d)>0$ such that
    $$\liminf_{n\rightarrow \infty}\sup_{f \in \Sobolev{\beta'}{L}} \EWindlr{\radon{f}}{\left|\widehat f_{n}(x) - f(x)\right|^2} \left(\frac{n}{\log(n)}\right)^{\frac{2\beta'-d}{2\beta' + d - 1}}\geq \mathfrak c. $$
\end{thm}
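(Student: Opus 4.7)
The plan is a proof by contradiction combined with a Cauchy--Schwarz change-of-measure argument in the spirit of \cite{Jirak_2014}. Assume the estimator sequence $\{\widehat f_n(x)\}_{n\in\N}$ satisfies the stated $\mathfrak C$-bound uniformly over $\Sobolev{\beta}{L}$, and suppose toward contradiction that, writing $\alpha(s):=(2s-d)/(2s+d-1)$, $\liminf_{n\to\infty}\sup_{f\in\Sobolev{\beta'}{L}}\EWindlr{\radon{f}}{|\widehat f_n(x)-f(x)|^2}(n/\log n)^{\alpha(\beta')}=0$; extracting a subsequence I may work with convergence to $0$ along $(n_k)$. I will exhibit a pair $(f_0,f_1)$ with $f_0\in\Sobolev{\beta}{L}$ and $f_1\in\Sobolev{\beta'}{L}$ whose sampling laws on the $n_k$-sample space are close enough to force a contradiction between the two bounds.

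For the construction, fix a smooth compactly supported bump $\phi:\R^d\to\R$ with $\int_{\R^d}\phi(y)\,\mathrm d\lambda^d(y)=0$, $\phi(0)=1$, and a smooth density $f_0\in\Sobolev{\beta}{L/2}$ with $f_0\geq c_*>0$ on a neighbourhood of $x$ containing $\mathrm{supp}\,\phi$. Set $\phi_h(y):=h^{-d/2}\phi((y-x)/h)$; the identity $\Fourier{d}{\phi_h}(t)=h^{d/2}e^{i\scalarpr{t}{x}}\Fourier{d}{\phi}(ht)$ inserted in \reff{eq:sobo:el} yields $\int_{\R^d}(1+\|t\|^2)^{\beta'}|\Fourier{d}{\phi_h}(t)|^2\,\mathrm d\lambda^d(t)\asymp h^{-2\beta'}$. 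Choose $h_n:=(\log n/n)^{1/(2\beta'+d-1)}$ and $\delta_n:=c_1 h_n^{\beta'}$ for a small constant $c_1>0$ to be tuned; then $f_1:=f_0+\delta_n\phi_{h_n}\in\Sobolev{\beta'}{L}$ (for $c_1$ bounded above by an $L$-dependent constant), $f_1$ is a genuine probability density for $n$ large, and the pointwise separation is $\Delta_n:=|f_1(x)-f_0(x)|=\delta_n h_n^{-d/2}\asymp(n/\log n)^{-\alpha(\beta')/2}$. By the projection theorem \reff{eq:proj:thm} together with the polar formula \reff{eq:pol:cor}, $\Lpn{\radon{\phi_{h_n}}}{2}{\mathcal Z}^2\asymp h_n^{d-1}$, with the exponent $d-1$ reflecting the ill-posedness already seen in Proposition \ref{prop:mse}.

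On the event $E_0:=\{|\widehat f_n(x)-f_0(x)|<\Delta_n/2\}$, the hypothesis at $\beta$ combined with Chebyshev's inequality yields $\PPindlr{\radon{f_0}}{E_0^c}\leq 4\mathfrak C n^{-\alpha(\beta)}/\Delta_n^2\leq Cn^{-\gamma}$ with $\gamma:=\alpha(\beta)-\alpha(\beta')>0$, so $E_0$ is overwhelmingly likely under $\PPindlr{\radon{f_0}}{\cdot}$. The triangle inequality forces $E_0\subset\{|\widehat f_n(x)-f_1(x)|\geq\Delta_n/2\}$; applying the contradiction assumption at $f_1\in\Sobolev{\beta'}{L}$ together with Chebyshev then gives $\PPindlr{\radon{f_1}}{E_0}\to 0$ along the subsequence.

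The contradiction is reached through a change of measure. Writing $L_n$ for the Radon--Nikodym derivative of $(\rho_d^{-1}\radon{f_1})^{\otimes n}$ with respect to $(\rho_d^{-1}\radon{f_0})^{\otimes n}$, Cauchy--Schwarz yields
\begin{align*}
\PPindlr{\radon{f_1}}{E_0}=\EWindlr{\radon{f_0}}{L_n\mathds{1}_{E_0}}\geq 1-\sqrt{\EWindlr{\radon{f_0}}{L_n^2}}\sqrt{\PPindlr{\radon{f_0}}{E_0^c}},
\end{align*}
while the tensorisation $\EWindlr{\radon{f_0}}{L_n^2}=(1+\chi^2(\rho_d^{-1}\radon{f_1},\rho_d^{-1}\radon{f_0}))^n$ together with $\chi^2(\rho_d^{-1}\radon{f_1},\rho_d^{-1}\radon{f_0})\leq\delta_n^2\Lpn{\radon{\phi_{h_n}}}{2}{\mathcal Z}^2/(\rho_d c_*')$, where $c_*'$ is a lower bound of $\radon{f_0}$ on the support of $\radon{\phi_{h_n}}$, yield $\EWindlr{\radon{f_0}}{L_n^2}\leq n^{c_1^2 C_\phi'}$ for a constant $C_\phi'$. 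Selecting $c_1$ so small that $c_1^2 C_\phi'<\gamma$ makes the right-hand side of the displayed inequality $1-o(1)$, contradicting $\PPindlr{\radon{f_1}}{E_0}\to 0$. The main obstacle I anticipate is the joint calibration of $c_1$ so that $f_1\in\Sobolev{\beta'}{L}$ and $c_1^2 C_\phi'<\gamma$ hold simultaneously: this is precisely where the logarithmic factor emerges through the budget $n\chi^2\asymp c_1^2\log n$, and where the Radon transform's degree-$(d-1)$ ill-posedness is quantitatively absorbed in the scaling $\Lpn{\radon{\phi_{h_n}}}{2}{\mathcal Z}^2\asymp h_n^{d-1}$.
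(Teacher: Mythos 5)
Your strategy is sound and uses the same core machinery as the paper's proof: a two-point alternative $(f_0,f_1)$ built by perturbing a fixed density with a rescaled bump around $x$, a change-of-measure estimate via Cauchy--Schwarz, and the tensorised $\chi^2$-divergence to control the closeness of the two $n$-fold product measures. The essential difference is in the framing. The paper argues \emph{directly}: starting from the reverse triangle inequality and Jensen, it derives the lower bound
\begin{align*}
\sup_{f\in\Sobolev{\beta'}{L}}\left(\EWindlr{\radon{f}}{\left|\widehat f_n(x)-f(x)\right|^2}\right)^{1/2}\geq \delta h^{\beta'-d/2}-2\mathfrak C^{1/2} n^{-\frac{2\beta-d}{4\beta+2d-2}}\left(1+\chi^2\left(\mathds P^n_{\radon{f_1}},\mathds P^n_{\radon{f_0}}\right)\right),
\end{align*}
and then chooses $h$ and $\delta$ so that the right-hand side is a positive multiple of $(n/\log n)^{-(\beta'-d/2)/(2\beta'+d-1)}$. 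Your proof is by \emph{contradiction}, working with the events $E_0:=\{|\widehat f_n(x)-f_0(x)|<\Delta_n/2\}$ and two Chebyshev bounds (under $\mathds P_{\radon{f_0}}$ using the $\beta$-hypothesis and under $\mathds P_{\radon{f_1}}$ using the assumed failure). Both approaches are in the spirit of the reference the paper cites, and your $\chi^2$ computation, the $\Lpn{\radon{\phi_{h_n}}}{2}{\mathcal Z}^2\asymp h_n^{d-1}$ scaling, and the budget $n\chi^2\asymp c_1^2\log n$ all match. Your construction differs cosmetically: you use $\phi_h(y)=h^{-d/2}\phi((y-x)/h)$ with amplitude $\delta_n=c_1h_n^{\beta'}$, the paper uses $\Psi_{h,x}(y)=\psi(\|y-x\|/h)$ with amplitude $\delta h^{\beta'-d/2}$; the product gives the same separation $\asymp h^{\beta'-d/2}$ at $x$. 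The paper also fixes $f_0$ to be the Gaussian centred at $x$ so that the lower bound of $\radon{f_0}$ on the relevant strip is explicit via Example~\ref{ex:radon}; your assumption $f_0\geq c_*>0$ near $x$ is enough but leaves one small verification implicit (that this implies a uniform lower bound on $\radon{f_0}(s,u)$ for $u$ in the $O(h)$-neighbourhood of $\scalarpr{x}{s}$; this does hold since the hyperplane slice of the neighbourhood has measure bounded away from zero).

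One genuine gap to fix: your contradiction hypothesis ``$\liminf\cdots=0$ along a subsequence'' only establishes that the liminf is strictly positive, whereas the theorem claims the existence of a constant $\mathfrak c=\mathfrak c(\mathfrak C,\beta,\beta',d)>0$ bounding the liminf from below. To recover the quantitative statement you should instead start from ``suppose $\liminf\cdots<\mathfrak c$'' with $\mathfrak c$ to be chosen; then along a subsequence the $\mathds P_{\radon{f_1}}$-Chebyshev bound gives $\PPindlr{\radon{f_1}}{E_0}\leq 8\mathfrak c/(c_1^2\phi(0)^2)$ rather than $o(1)$, and the change-of-measure inequality yields the contradiction once $\mathfrak c$ is chosen small in terms of $\mathfrak C,\beta,\beta',d$ and your calibrated $c_1$. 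Every step in your argument is already quantitative, so the repair is a matter of rephrasing; the paper's direct route simply produces the constant automatically.
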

\section{Numerical simulation}\label{sec_nu}
In this section, we demonstrate the reasonable behaviour of the estimator $\widehat f_{\widehat m}(x)$, defined in \eqref{eq:est} and \eqref{eq:adap:m} for the case $d=2$. To do so, we exploit two examples of densities $f$ in a Monte Carlo simulation with 500 iterations and varying values of $x \in \R^2$ and sample sizes $n\in \N$. For the densities we consider
\begin{enumerate}
\item \textit{Bivariate normal distribution.} $f(x)=(2\pi)^{-1} \exp(-\| x\|^2/2)$, for $ x\in \R^2$, and
\item \textit{Uniform distribution.} $f( x)=\pi^{-1} \mathds 1_{\Sphere^1}(x)$, for $ x \in \R^2$.
\end{enumerate}
The first example fulfills all assumptions we have made on the density $f$. The second example, that of a uniform distribution on the unit sphere, does not fulfil the assumption that $\mathcal F_d[f]\in \mathds L^1(\R^2)$, as it is not continuous on the boundary of $\Sphere^1$. Nevertheless, we include this example to visualize that the estimator behaves reasonably even if not all assumptions are satisfied.
Based on a preliminary simulation study, we choose $\chi=0.003$ in all examples. Moreover, we use $\mathcal M_n=\{0.1 m| m\in \N, m \leq n\}$ instead of the definition in \eqref{eq:Mn}. Regarding the proof of Theorem \ref{theo:adap}, this only affects the positive constant $C(f, \chi, d)$ and has no effect on the rate.
\begin{minipage}[t]{\textwidth}
	\centering{\begin{minipage}[t]{0.32\textwidth}
			\includegraphics[width=\textwidth,height=40mm]{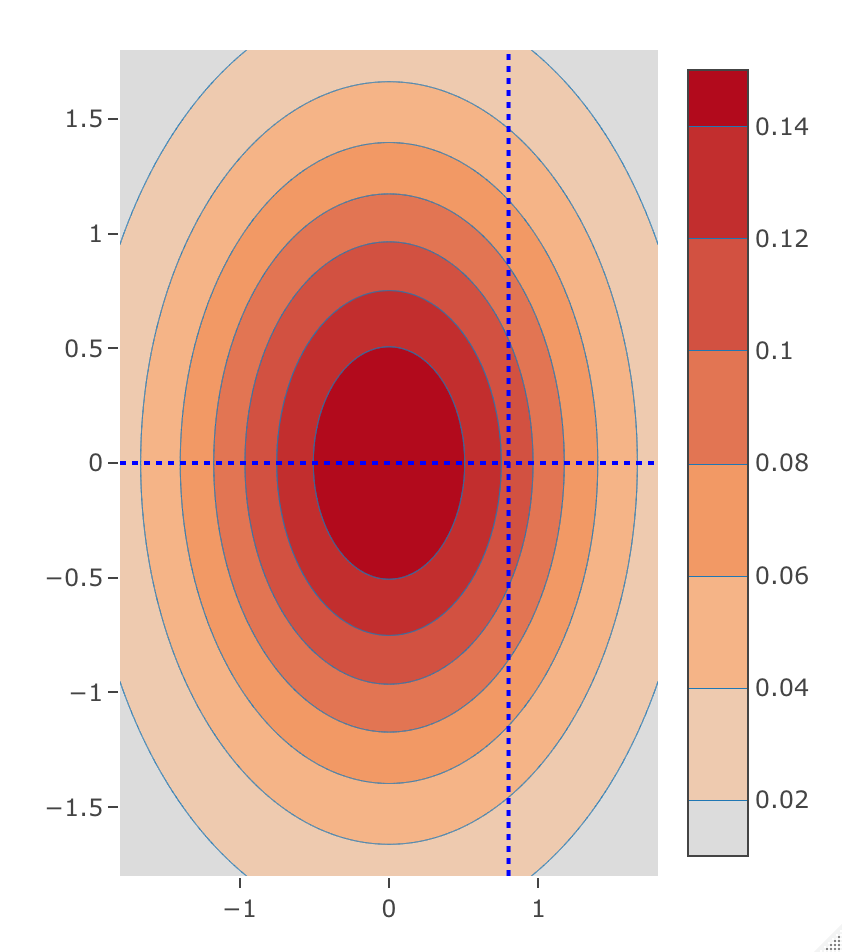}
		\end{minipage}
		\begin{minipage}[t]{0.32\textwidth}
			\includegraphics[width=\textwidth,height=42mm]{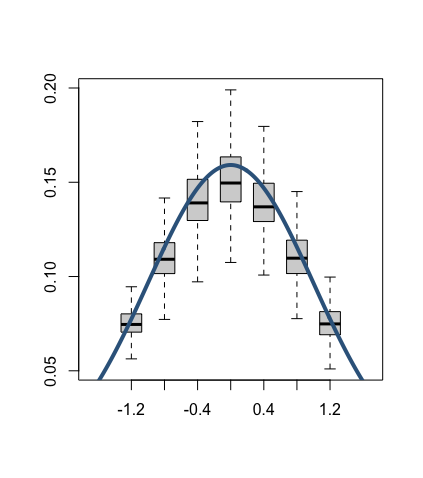}
	\end{minipage}
\begin{minipage}[t]{0.32\textwidth}
	\includegraphics[width=\textwidth,height=42mm]{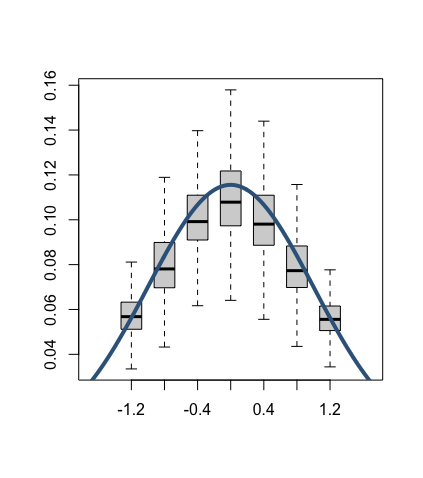}
\end{minipage}}
	\centering{\begin{minipage}[t]{0.32\textwidth}
			\includegraphics[width=\textwidth,height=40mm]{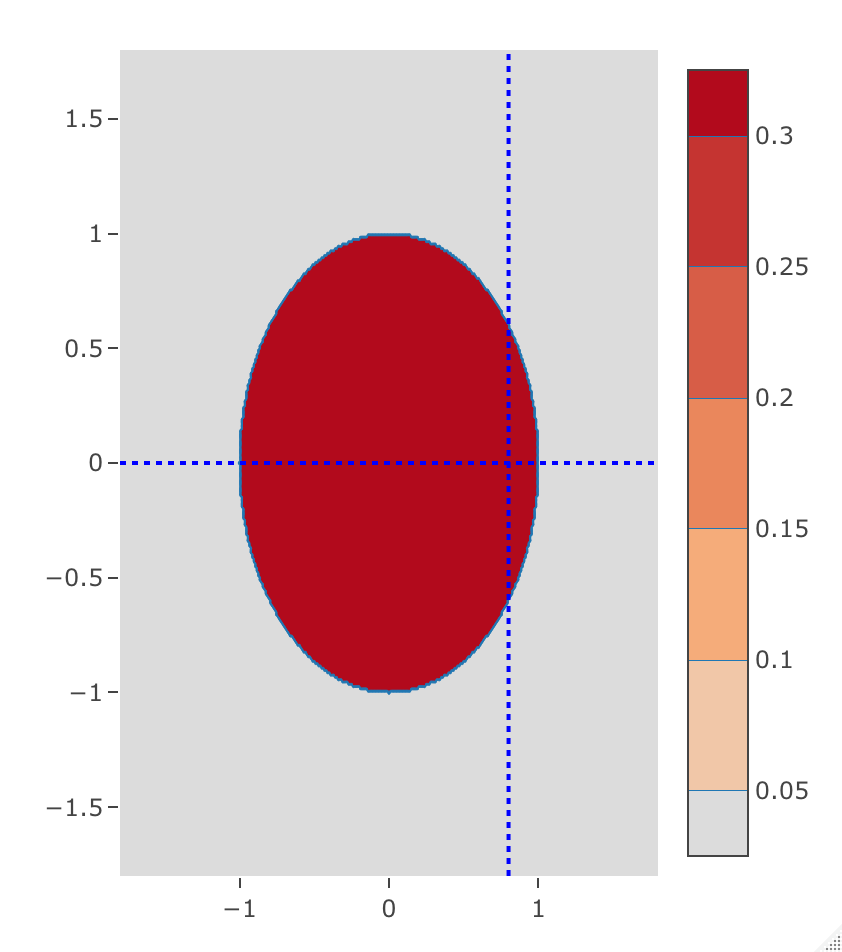}
		\end{minipage}
		\begin{minipage}[t]{0.32\textwidth}
			\includegraphics[width=\textwidth,height=42mm]{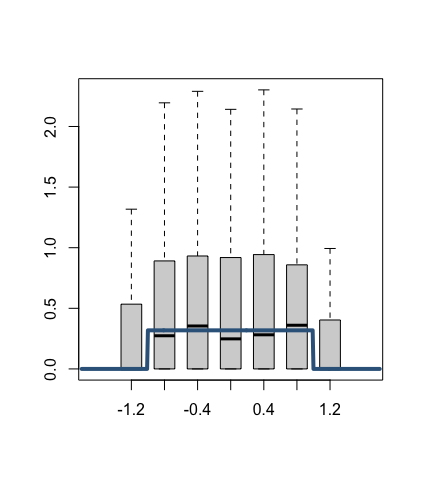}
	\end{minipage}
\begin{minipage}[t]{0.32\textwidth}
	\includegraphics[width=\textwidth,height=42mm]{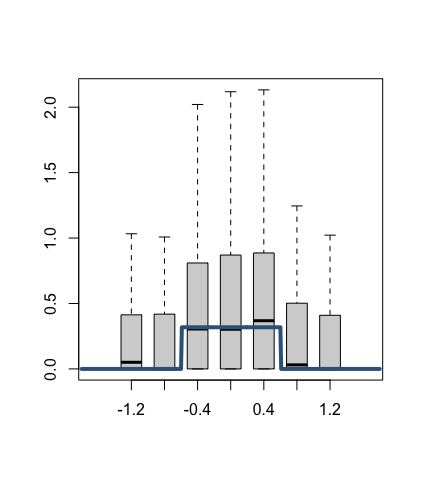}
\end{minipage}}
	\captionof{figure}{\label{figure:1}Considering the examples $(i)$ (top) and $(ii)$ (bottom). For each example, we illustrate the true density (left), the horizontal cut (middle) and the vertical cut (right). For the middle and right column, a Monte Carlo simulation with $n=10.000$ and 500 iterations of $\widehat f_{\widehat m}(x)$ has been performed. The dark blue curve is the true function $f$.}
\end{minipage}
In Figure \ref{figure:1}, we see a convenient behaviour of our fully data-driven estimator $\widehat f_{\widehat m}(x)$ for varying $x\in \R^2$. Comparing (i) and (ii) the estimation of the value $f(x)$ seems to be more complex in setting (i). Although the example (ii) does not fulfil the assumptions, it can be seen that the support of the density is correctly identified through the proposed fully data-driven estimator.

\begin{minipage}[t]{\textwidth}
	\centering{\begin{minipage}[t]{0.49\textwidth}
			\includegraphics[width=\textwidth,height=72mm]{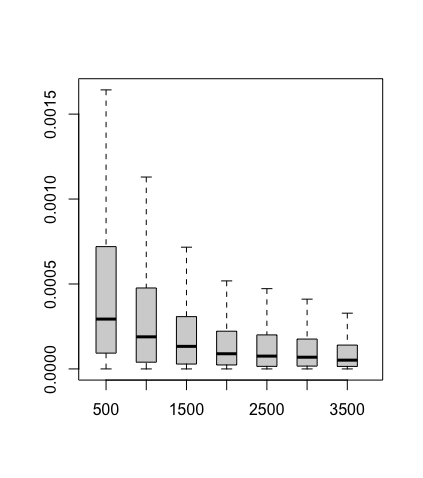}
		\end{minipage}
		\begin{minipage}[t]{0.49\textwidth}
			\includegraphics[width=\textwidth,height=72mm]{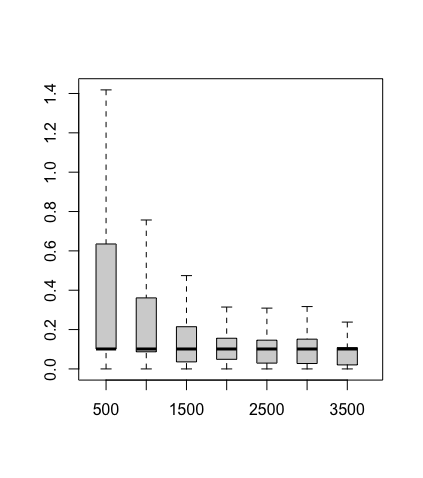}
	\end{minipage}}
	\captionof{figure}{\label{figure:2}Considering the examples $(i)$ (left) and $(ii)$ (right). For each example, we illustrate $|f((0,0))-\widehat f_{\widehat m}((0,0))|^2$ using a  Monte Carlo simulation with 500 iterations and varying sample size $n\in \{500,1.500,2.000,2.500,3.000,3.500\}$.}
\end{minipage}
    Figure \ref{figure:2} illustrates the asymptotic behaviour of the risk using a Monte Carlo simulation. From these illustrations, it seems that the risk is decreasing in the sample size $n\in \N$ which suggests that our estimation strategy is consistent, even if the considered assumptions are not fulfilled.
    
\paragraph*{Acknowledgement}
    This work is supported by the Deutsche Forschungsgemeinschaft (DFG, German Research Foundation) under Germany’s Excellence Strategy EXC 2181/1 - 390900948 (the Heidelberg STRUCTURES Excellence Cluster) and by the Research Training Group ”Statistical Modeling of Complex Systems”.
    This research has been partially funded by Deutsche Forschungsgemeinschaft (DFG) - Project-ID 410208580 - IRTG2544 ("Stochastic Analysis in Interaction").
\newpage
\section{Appendix}
\label{sec_append}

\subsection{Useful inequalities}
For the proof of Lemma \ref{lem:concen} below, we need the
   following Bernstein inequality, which in this form can be found in
   \cite{Comte2017} and is based on a similar formulation in
   \cite{BirgeMassart1998}.
\begin{lem}[Bernstein inequality] \label{ineq: bernstein}\leavevmode\newline 
	Let $T_1,\ldots,T_n$ be independent random variables, and we define $S_n(T) := \sum_{i=1}^n \left(T_i - \EW{T_i}\right)$. Then, for any $\eta > 0$, we get
	\begin{align*}
		\PP{\left\vert S_n(T) - \EW{S_n(T)} \right\vert \geq n \eta} 
		& \leq 2 \max\left\{\exp\left(-\frac{n \eta^2}{4v^2}\right), \exp\left(-\frac{n \eta}{4b}\right)\right\},
	\end{align*}
	where $\frac{1}{n}\sum_{i=1}^n \EW{\left\vert T_i \right\vert^m} \leq \frac{m!}{2}v^2b^{m-2}$, for all $m \geq 2$, for some positive constants $v$ and $b$. \newline
	If the random variables $T_i$,$i \in \llbracket n \rrbracket$, are identically distributed, the previous condition can be replaced by $\Vari{T_1} \leq v^2$ and $\vert T_1 \vert \leq b$. 
\end{lem}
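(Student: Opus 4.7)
The plan is to use the classical Chernoff--Cram\'er approach. Setting $Z_i := T_i - \EW{T_i}$, the sum $S_n(T)= \sum_{i=1}^n Z_i$ is already centred, so $\EW{S_n(T)}=0$ and the claim reduces to a two-sided tail bound on $S_n(T)$. For $\lambda>0$, Markov's inequality applied to $\exp(\lambda S_n(T))$ together with independence yields
\begin{align*}
\PP{S_n(T)\geq n\eta}\leq \exp(-\lambda n\eta)\prod_{i=1}^n \EW{\exp(\lambda Z_i)}.
\end{align*}

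The key step is to control each moment-generating factor using the hypothesis on the absolute moments. From the elementary inequality $|Z_i|^m\leq 2^{m-1}(|T_i|^m+|\EW{T_i}|^m)$ and the assumption on $n^{-1}\sum_i \EW{|T_i|^m}$, I would derive a Bernstein-type bound of the shape $n^{-1}\sum_i \EW{|Z_i|^m}\leq (m!/2)\widetilde v^2\widetilde b^{m-2}$ for all $m\geq 2$, with constants $\widetilde v,\widetilde b$ obtained from $v,b$ by absorbing universal factors. Taylor-expanding $\exp(\lambda Z_i)$, using $\EW{Z_i}=0$ to cancel the linear term, and summing the geometric series $\sum_{m\geq 2}(\lambda \widetilde b)^{m-2}$ then give
\begin{align*}
\prod_{i=1}^n \EW{\exp(\lambda Z_i)}\leq \exp\left(\frac{n\lambda^2 \widetilde v^2}{2(1-\lambda \widetilde b)}\right),\quad 0<\lambda< 1/\widetilde b.
\end{align*}

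Optimising the exponent in $\lambda$ with the choice $\lambda=\eta/(\widetilde v^2+\widetilde b\eta)$ produces the classical Bennett--Bernstein estimate $\exp\bigl(-n\eta^2/(2(\widetilde v^2+\widetilde b\eta))\bigr)$. Splitting according to the regimes $\widetilde v^2\geq \widetilde b\eta$ and $\widetilde v^2<\widetilde b\eta$ upper bounds this by $\max\{\exp(-n\eta^2/(4\widetilde v^2)),\exp(-n\eta/(4\widetilde b))\}$, which, after re-absorbing the universal factors from the centring step into $v$ and $b$, matches the right-hand side of the lemma. The prefactor $2$ follows by adding the symmetric bound on $\PP{-S_n(T)\geq n\eta}$, obtained by running the same argument with $-T_i$ in place of $T_i$. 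For the i.i.d. reduction, $\Vari{T_1}\leq v^2$ and $|T_1|\leq b$ imply $|Z_1|\leq 2b$ and $\EW{|Z_1|^m}\leq (2b)^{m-2}\EW{Z_1^2}\leq (m!/2)v^2(2b)^{m-2}$, so the general hypothesis holds with $b$ replaced by $2b$.

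The main obstacle is purely bookkeeping: tracking the constants through the centring and moment-series steps so that the final exponents agree with the prescribed $1/(4v^2)$ and $1/(4b)$ rather than with generic constants depending on the passage $T_i\to Z_i$. Since the inequality is classical and corresponds in this precise form to the statement recorded in \cite{Comte2017} and \cite{BirgeMassart1998}, in the paper I would simply cite those sources for the exact constants rather than redo the optimisation in full.
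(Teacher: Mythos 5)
The paper does not prove this lemma at all: it records it as a known result and cites \cite{Comte2017} and \cite{BirgeMassart1998}, so your concluding move of deferring to those references for the precise constants is exactly what the paper does, and your sketch of the underlying Cram\'er--Chernoff argument is the standard route.

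That said, the specific bookkeeping you propose would not recover the constants $4v^2$ and $4b$ as stated. Passing from the moment hypothesis on $T_i$ to one on $Z_i=T_i-\EW{T_i}$ via $|Z_i|^m\leq 2^{m-1}(|T_i|^m+|\EW{T_i}|^m)$ forces $\tilde v=2v$ and $\tilde b=2b$, so the optimisation $\lambda=\eta/(\tilde v^2+\tilde b\eta)$ and the two-regime split deliver $\exp(-n\eta^2/(16v^2))$ and $\exp(-n\eta/(8b))$, not the displayed bounds; your i.i.d.\ reduction likewise proves the statement only with $b$ replaced by $2b$. The standard way to avoid this loss is never to bound moments of $Z_i$: instead write $\EW{e^{\lambda Z_i}}=e^{-\lambda\EW{T_i}}\,\EW{e^{\lambda T_i}}$, expand $\EW{e^{\lambda T_i}}\leq 1+\lambda\EW{T_i}+\sum_{m\geq2}\frac{\lambda^m}{m!}\EW{|T_i|^m}$, and apply $\log(1+u)\leq u$; the linear term cancels against $-\lambda\EW{T_i}$, and the raw moment condition on $T_i$ transfers without inflation to
\[
\sum_{i=1}^n\log\EW{e^{\lambda Z_i}}\ \leq\ \frac{n\lambda^2 v^2}{2(1-\lambda b)},\qquad 0<\lambda<1/b,
\]
from which $\lambda=\eta/(v^2+b\eta)$ gives $\exp\bigl(-n\eta^2/(2(v^2+b\eta))\bigr)$ and the split on $v^2\gtrless b\eta$ yields exactly $4v^2$ and $4b$. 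This is not purely cosmetic in this paper: the identity $\tfrac{n}{4v^2}\tfrac{V(m)}{3}=\tfrac{\chi}{12}\log n$ in the proof of Lemma~\ref{lem:concen} is what fixes the threshold $\chi\geq24$, and the factor-of-two slack in your route would double it. Since you ultimately cite rather than re-derive, there is no error in your final answer, but the sketch as written would not stand on its own with the stated constants.
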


\subsection{Proof of Section \ref{sec_intro}}\label{a:intro}
\begin{proof}[Proof of Proposition \ref{prop:mse}]
First, the usual bias-variance decomposition leads to 
	\begin{align*}
		\EWindlr{\radon{f}}{\left\vert \widehat{f}_m(x) - f(x)\right\vert^2} \nonumber
			= \left|f_m(x) - f(x)\right|^2 + \EWindlr{\radon{f}}{\left| f_m(x) - \widehat{f}_m(x)\right|^2}
	\end{align*}
	For the first term, the squared bias term, we obtain 
 \begin{align*}
     |f_m(x)-f(x)| \leq \frac{1}{(2\pi)^{d}} \int_{\ballc{m}} |\Fourier{d}{f}(t)| \inte\lambda^d(t)=  \frac{1}{(2 \pi)^{d}}\Lpn{\indikator_{\ballc{m}} \Fourier{d}{f}}{1}{\R^d}.
 \end{align*}
By construction of $\widehat f_m$ and as $((S_i, U_i))_{i\in \llbracket n\rrbracket}$ are i.i.d. the variance term is bounded, we get
\begin{align*}
    \EWindlr{\radon{f}}{\left| f_m(x) - \widehat{f}_m(x)\right|^2} &\leq \frac{1}{n}  \EWindlr{\radon{f}}{K_m^2\left(\scalarpr{S_1}{x} - U_1\right)} \\
    &= \frac{1}{n\rho_d} \int_{\Sphere^{d-1}}\int_{\R} K_m^2\left(\scalarpr{s}{x} - u\right) \radon{f}(s,u) \Lebesgue{}{u} \surfacemeasure{d-1}{s}\\
    &=\frac{m^{2d-1}}{n} \mu_m
\end{align*}
as $\mu_m=\rho_d^{-1} \int_{\Sphere^{d-1}}\int_{\R} mK_1^2\left(m(\scalarpr{s}{x} - u\right)) \radon{f}(s,u) \hspace{-4pt}\Lebesgue{}{u} \hspace{-4pt} \surfacemeasure{d-1}{s}$ and $K_m(u)=m^d K_1(um), u\in \R$.  It remains to show that $\mu_m$ is bounded for all $m\in \R_+$. Indeed, we conclude for $s\in \Sphere^{d-1}$
\begin{align*}
    m\int_{\R} K_1^2(m(\scalarpr{s}{x}-u))  \radonind{s}{f}(u) \Lebesgue{}{u} 
    = \frac{1}{2\pi}\int_{\R} e^{-it\scalarpr{s}{x}} \Fourier{1}{K_1^2}(t/m) \Fourier{d}{f}(st)\Lebesgue{}{t}.
\end{align*}
Thus
\begin{align*}
    &m\left|\int_{\Sphere^{d-1}}\int_{\R} K_1^2\left(m(\scalarpr{s}{x} - u)\right) \radonind{s}{f}(u) \Lebesgue{}{u} \surfacemeasure{d-1}{s}\right| \\
    &\leq \frac{1}{2\pi}\int_{\Sphere^{d-1}}\int_{\R}|\Fourier{1}{K_1^2}(t/m) \Fourier{d}{f}(st)|\Lebesgue{}{t}\surfacemeasure{d-1}{s}\\
    &\leq \frac{\|K_1\|_{\Lp{2}(\R)}^2}{2\pi}\int_{\Sphere^{d-1}}\int_{\R}|\Fourier{d}{f}(st)|\Lebesgue{}{t}\surfacemeasure{d-1}{s}.
\end{align*}
The statement follows applying \eqref{eq:pol:cor} and using $\|K_1\|_{\Lp{2}(\R)}^2=\rho_d^2/((2\pi)^{2d-1}2(2d-1))$
\begin{align}\label{ineq:L2:radon}
    &\phantom{\leq}\int_{\Sphere^{d-1}}\int_{\R}|\Fourier{d}{f}(st)|\Lebesgue{}{t}\surfacemeasure{d-1}{s} \notag\\
    &\leq 2\int_{\Sphere^{d-1}}\int_{(0,\infty)}|\Fourier{d}{f}(st)|\left(1\vee t^{d-1}\right)\Lebesgue{}{t}\surfacemeasure{d-1}{s} \notag\\
    &\leq 2\left(\rho_d+\|\Fourier{d}{f}\|_{\Lp{1}\left(\R^d\right)}\right),
\end{align}
where $a\vee b := \max\{a, b\}$.
\end{proof}
\subsection{Proof of Section \ref{sec_mm}}\label{a:mm}

\begin{proof}[Proof of Theorem \ref{thm:low:bou}]
	First, we outline the main steps of the proof, which are captured in multiple lemmata and proven after this proof is completed. We will construct a function in the Sobolev ellipsoids $\Sobolev{\beta}{L}$ by a perturbation of the density $f_0:\R^d \rightarrow \R$ with a small bump, such that the distance $\left\vert f_1\left(x\right) - f_0\left(x\right)\right\vert$ for any $x\in\R^d$ and the Kullback--Leibler divergence of their induced distributions can be bounded from below and above, respectively. The claim then follows by applying Theorem 2.2 in \cite{Tsybakov2008}. We use the following construction, which we present first. \\
	For $f_0$, we choose the density of the multivariate normal distribution, i.e.
	\begin{align}\label{eq:f:1}
		f_0(y) &= \frac{1}{\left(\sqrt{2 \pi}\right)^d} \exp\left(- \frac{1}{2} \scalarpr{x-y}{x-y}\right) \text{ for } x, y \in \R^d.
  \end{align}
	 Let $\psi \in C_0^\infty\left(\R\right)$ be a function with  $\mathrm{supp}(\psi)\subseteq[0,1]$, $\psi(0)=1$, $\|\psi\|_{\infty}=1$ and $\int_0^1 y^{d-1} \psi(y) \inte y = 0$. For a bump-amplitude $\delta > 0$ and for $h \in (0,1)$ (to be chosen later on) we define
    \begin{align}
		f_1(y) &:= f_0(y) + \delta h^{\beta - \frac{d}{2}} \Psi_{h, x}(y), \label{eq: f1 fourier radon}
	\end{align}
	where $x$, $y \in \R^d$ and $\Psi_{h, x}(y) := \psi(\| y - x \|/h)$. So far, we have  not presented a sufficient condition to ensure that our constructed function $f_1$ is in fact a density. This is covered by the following lemma.
	\begin{lem}\label{hilfslemma 1: f1 density radon pointwise}
		Let $0 < \delta \leq (2\pi e)^{-d/2}$. Then $f_1$, as in Equation \eqref{eq:f:1}, is a density.
	\end{lem}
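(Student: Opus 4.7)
The plan is to verify the two defining axioms of a probability density: $f_{1}\geq 0$ pointwise and $\int_{\R^{d}} f_{1}\, d\lambda^{d}=1$. Both reduce to short calculations from the listed properties of $\psi$ (namely $\mathrm{supp}(\psi)\subseteq[0,1]$, $\|\psi\|_{\infty}=1$, and $\int_{0}^{1}y^{d-1}\psi(y)\, dy=0$) together with the bound $h\in(0,1)$ and the bound on $\delta$.

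For the integral condition, I would split $\int f_{1}=\int f_{0}+\delta h^{\beta-d/2}\int \Psi_{h,x}$. The first integral equals $1$ since $f_{0}$ is a Gaussian density. For the second, the polar-coordinate identity \eqref{eq:pol:cor}, applied after the translation $y\mapsto y-x$ and then the substitution $u=r/h$, gives
$$\int_{\R^{d}}\psi(\|y-x\|/h)\, d\lambda^{d}(y)=\sigma^{d-1}(\Sphere^{d-1})\, h^{d}\int_{0}^{1}u^{d-1}\psi(u)\, du,$$
which vanishes by the assumption on $\psi$. Hence $\int f_{1}=1$.

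For nonnegativity, I would split on whether $y$ lies in the ball $\ball{h}(x)$. Outside, $\Psi_{h,x}(y)=0$ because $\mathrm{supp}(\psi)\subseteq[0,1]$, so $f_{1}(y)=f_{0}(y)>0$. Inside, I bound the Gaussian from below on the ball of radius $h\leq 1$ around $x$:
$$f_{0}(y)=(2\pi)^{-d/2}e^{-\|y-x\|^{2}/2}\geq (2\pi)^{-d/2}e^{-1/2}\geq (2\pi)^{-d/2}e^{-d/2}=(2\pi e)^{-d/2},$$
where the last inequality uses $d\geq 2$. Combining this with $|\Psi_{h,x}|\leq\|\psi\|_{\infty}=1$, the bound $h^{\beta-d/2}\leq 1$ (valid since $\beta>d/2$, the regime imposed by the surrounding Sobolev-embedding framework, and $h\in(0,1)$), and the hypothesis $\delta\leq(2\pi e)^{-d/2}$, I conclude $f_{1}(y)\geq (2\pi e)^{-d/2}-\delta\cdot 1\cdot 1\geq 0$.

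The argument is essentially book-keeping, so I do not anticipate a real obstacle; the only subtlety is controlling the bump amplitude $\delta h^{\beta-d/2}$ uniformly in $h\in(0,1)$, which is precisely why the Gaussian lower bound on $\ball{1}(x)$ and the quantitative hypothesis $\delta\leq(2\pi e)^{-d/2}$ are chosen the way they are.
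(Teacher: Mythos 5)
Your proof is correct and follows essentially the same route as the paper's: the polar-coordinate identity \eqref{eq:pol:cor} together with $\int_0^1 y^{d-1}\psi(y)\,dy=0$ gives $\int\Psi_{h,x}=0$, and the nonnegativity is handled by localizing to the support of the bump and lower-bounding the Gaussian there. The only cosmetic difference is that you work on the exact support (the ball of radius $h$ about $x$, giving the slightly sharper intermediate bound $(2\pi)^{-d/2}e^{-1/2}$ before relaxing to $(2\pi e)^{-d/2}$ via $d\geq 2$), whereas the paper bounds $f_0$ from below on the cube $[x-1,x+1]$; both yield the same threshold on $\delta$, and your explicit remark that $h^{\beta-d/2}\leq 1$ relies on $\beta\geq d/2$ is a reasonable flag of an assumption the paper uses implicitly.
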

	 Further, we can show that these densities lie inside the Sobolev ellipsoids $\Sobolev{\beta}{L}$ for $L \in \R_{+}$ big enough. This is captured in the following lemma.
	\begin{lem}\label{hilfslemma 2: f0 f1 in fourier sobolev radon pointwise}
		Let $s \in \R_+$. Then, there is a constant $L_{\beta, d} > 0$ depending on $\beta$, $d$ such that $f_0$ and $f_1$ belong to $\Sobolev{\beta}{L_{\beta, d}}$.
	\end{lem}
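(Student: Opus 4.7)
The plan is to handle $f_0$ and $f_1$ separately, exploiting that the Sobolev norm is a Hilbert norm and hence the triangle inequality reduces the $f_1$ case to bounding $\|f_0\|$ and $\|\Psi_{h,x}\|$ (in the $\mathcal{W}^\beta$-sense). For $f_0$, since $f_0(y) = g(y-x)$ with $g$ the standard Gaussian, the translation property of the Fourier transform and the identity $\Fourier{d}{g}(t)=\exp(-\|t\|^2/2)$ yield $|\Fourier{d}{f_0}(t)|^2 = \exp(-\|t\|^2)$. Consequently
\begin{align*}
\int_{\R^d}(1+\|t\|^2)^\beta |\Fourier{d}{f_0}(t)|^2\Lebesgue{d}{t} = \int_{\R^d}(1+\|t\|^2)^\beta e^{-\|t\|^2}\Lebesgue{d}{t} =: C_1(\beta,d)<\infty,
\end{align*}
which settles the assertion for $f_0$ by choosing any $L \geq C_1(\beta,d)$.

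For $f_1$, I would write $\Psi_{h,x}(y)=\phi((y-x)/h)$ with $\phi(z):=\psi(\|z\|)$. Under the assumption that $\psi$ extends to a smooth even function (so that $\phi \in C_c^\infty(\R^d)$), its Fourier transform $\Fourier{d}{\phi}$ is a Schwartz function, and in particular $\phi \in \Sobolev{\beta}{C_2}$ for a constant $C_2=C_2(\psi,\beta,d)$. The translation and scaling rules of the Fourier transform give $\Fourier{d}{\Psi_{h,x}}(t) = e^{i\scalarpr{t}{x}} h^d \Fourier{d}{\phi}(ht)$. Substituting $u=ht$ and using $h\in(0,1)$,
\begin{align*}
\int_{\R^d}(1+\|t\|^2)^\beta |\Fourier{d}{\Psi_{h,x}}(t)|^2\Lebesgue{d}{t}
&= h^{d}\int_{\R^d}\left(1+\tfrac{\|u\|^2}{h^2}\right)^\beta |\Fourier{d}{\phi}(u)|^2\Lebesgue{d}{u}\\
&\leq h^{d-2\beta}\int_{\R^d}(1+\|u\|^2)^\beta|\Fourier{d}{\phi}(u)|^2\Lebesgue{d}{u} \leq C_2\, h^{d-2\beta}.
\end{align*}
Crucially, the amplitude $\delta h^{\beta-d/2}$ in the definition of $f_1$ is tuned so that $\delta^2 h^{2\beta-d}\cdot h^{d-2\beta}=\delta^2$ is independent of $h$.

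Combining these two pieces via $\|a+b\|^2\leq 2\|a\|^2+2\|b\|^2$ on the Hilbert space with norm $\|f\|^2_{\mathcal{W}^\beta}:=\int(1+\|t\|^2)^\beta|\Fourier{d}{f}(t)|^2\Lebesgue{d}{t}$ yields
\begin{align*}
\|f_1\|_{\mathcal{W}^\beta}^2 \leq 2\|f_0\|_{\mathcal{W}^\beta}^2 + 2\delta^2 h^{2\beta-d}\|\Psi_{h,x}\|_{\mathcal{W}^\beta}^2 \leq 2C_1(\beta,d)+2\delta^2 C_2(\psi,\beta,d),
\end{align*}
so the claim holds with $L_{\beta,d}:= 2C_1(\beta,d)+2\delta^2 C_2(\psi,\beta,d)$, which depends only on $\beta$ and $d$ (since $\psi$ and the upper bound on $\delta$ are fixed by the construction).

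The only subtlety I anticipate is justifying $C_2(\psi,\beta,d)<\infty$, i.e.\ controlling the Sobolev norm of the radial bump $\phi(z)=\psi(\|z\|)$ on $\R^d$; this requires the tacit hypothesis that $\psi$ be smooth as a radial profile (equivalently that all odd derivatives of $\psi$ vanish at the origin) so that $\phi\in C_c^\infty(\R^d)$ and hence $\Fourier{d}{\phi}$ decays faster than any polynomial. Once this regularity is in place, the rest of the argument is just the elementary change-of-variables computation outlined above, and the dependence on $h$ cancels exactly by the choice of the normalising factor $h^{\beta-d/2}$.
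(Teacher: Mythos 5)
Your proof is correct and follows essentially the same route as the paper: compute the Sobolev weight of the Gaussian directly, use the translation/scaling rule for the Fourier transform together with $h\leq 1$ to show the Sobolev norm of $\delta h^{\beta-d/2}\Psi_{h,x}$ is bounded by $\delta^2$ times a constant independent of $h$, then combine (the paper uses the triangle inequality before squaring rather than your $\|a+b\|^2\le 2\|a\|^2+2\|b\|^2$, but that is immaterial). Your closing remark is a genuinely useful catch: the paper's stated hypotheses on $\psi$ (smooth with compact support in $[0,1]$ yet $\psi(0)=1$) are formally inconsistent, and what is really needed — and tacitly used when asserting $\Lpnlr{\Fourier{d}{\Psi_{1,0}}(1+\|\cdot\|^2)^{\beta/2}}{2}{\R^d}<\infty$ for arbitrary $\beta$ — is precisely that $\psi$ be the radial profile of a $C_c^\infty(\R^d)$ bump, as you make explicit.
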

	The following lemma captures that the difference $\left\vert f_0\left(x\right) - f_1\left(x\right)\right\vert^2$ and the Kullback--Leibler divergence of their induced distributions is bounded from below and above, respectively. Let us denote by $\mathds{P}_{\radon{f}}$ the measure with density $\rho_d^{-1} \radon{f}$ and  by $f^{\ast}$ the density of the standard normal distribution.
	\begin{lem}\label{hilfslemma 3: f0 f1 bounds radon pointwise}
		Let $h \in (0,1)$. Then, for $x\in \R^d$,
		\begin{enumerate}
			\item[$(i)$] $\left\vert f_1\left(x\right) - f_0\left(x\right)\right\vert^2 = \delta^2 h^{2\beta-d}$ and
			\item[$(ii)$] $\KL{\mathds{P}_{\radon{f_1}}}{\mathds{P}_{\radon{f_0}}} \leq \delta^2 h^{2\beta+d-1}(\rho_d+(2\pi)^d)(2\pi e)^{-1/2}$.
		\end{enumerate}
	\end{lem}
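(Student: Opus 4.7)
Part (i) is a direct computation: by the definition \eqref{eq: f1 fourier radon} and $\psi(0)=1$, one has $f_1(x)-f_0(x)=\delta h^{\beta-d/2}\Psi_{h,x}(x)=\delta h^{\beta-d/2}$, and squaring yields the claim.

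For part (ii), the plan is to pass from the Kullback--Leibler divergence to a chi-squared type integral through the elementary inequality $\log(1+t)\leq t$ applied to $t=\radon{f_1}/\radon{f_0}-1$, which is licit because $\radon{f_0}>0$ everywhere and $\radon{f_1}\geq 0$ by Lemma \ref{hilfslemma 1: f1 density radon pointwise}. Denoting $p_i:=\rho_d^{-1}\radon{f_i}$ the densities of $\mathds{P}_{\radon{f_i}}$ on $\mathcal Z$, and using $\int_{\mathcal Z}(p_1-p_0)\inte u\surfacemeasure{d-1}{s}=0$ together with linearity of the Radon transform, this produces
\begin{align*}
\KL{\mathds{P}_{\radon{f_1}}}{\mathds{P}_{\radon{f_0}}}\leq \int_{\mathcal Z}\frac{(p_1-p_0)^2}{p_0}\inte u\surfacemeasure{d-1}{s}=\rho_d^{-1}\delta^2 h^{2\beta-d}\,\mathcal I,
\end{align*}
where $\mathcal I:=\int_{\Sphere^{d-1}}\int_{\R}\radon{\Psi_{h,x}}(s,u)^2/\radon{f_0}(s,u)\inte u\surfacemeasure{d-1}{s}$.

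The bulk of the work is to bound $\mathcal I$ with the right scaling in $h$ and the correct numerical constant. Two ingredients enter. First, since $\mathrm{supp}(\Psi_{h,x})\subseteq\ball{h}(x)$, for each $s\in\Sphere^{d-1}$ the map $u\mapsto\radon{\Psi_{h,x}}(s,u)$ is supported in $\{u\in\R:|u-\scalarpr{s}{x}|\leq h\}$, and on that set the explicit Gaussian formula $\radon{f_0}(s,u)=(2\pi)^{-1/2}\exp(-(u-\scalarpr{s}{x})^2/2)$ from Example \ref{ex:radon} controls $1/\radon{f_0}$ through a factor of order $(2\pi e)^{1/2}$ when $h\leq 1$. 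Secondly, I would apply Plancherel in $u$ together with the projection theorem \eqref{eq:proj:thm} to obtain
\begin{align*}
\int_{\R}\radon{\Psi_{h,x}}(s,u)^2\inte u=\frac{1}{2\pi}\int_{\R}|\Fourier{d}{\Psi_{h,x}}(ts)|^2\inte t,
\end{align*}
and then use the scaling $\Fourier{d}{\Psi_{h,x}}(\xi)=h^d e^{i\scalarpr{x}{\xi}}\widetilde\psi(h\|\xi\|)$, where $\widetilde\psi$ denotes the $d$-dimensional Fourier transform of the radial profile $y\mapsto\psi(\|y\|)$, together with the substitution $\tau=ht$; this extracts the factor $h^{2d-1}$. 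Integrating against $\surfacemeasure{d-1}{s}$ contributes the sphere volume $\rho_d$, and the spherical $\Lp{1}$-bound used in \eqref{ineq:L2:radon} applied to $\widetilde\psi$ yields a constant of the form $\rho_d+(2\pi)^d$; combining all factors gives the scaling $h^{2\beta+d-1}$ and the stated constant $(\rho_d+(2\pi)^d)(2\pi e)^{-1/2}$.

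The main obstacle is not conceptual but the careful bookkeeping of all numerical factors so that they assemble into the precise combination $(\rho_d+(2\pi)^d)(2\pi e)^{-1/2}$; the $h$-exponent follows immediately from the $h^{2d-1}$ appearing in $\mathcal I$ combined with the prefactor $\rho_d^{-1}\delta^2 h^{2\beta-d}$, and the sphere integration plus \eqref{ineq:L2:radon} are the only non-routine ingredients.
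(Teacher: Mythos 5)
Your plan is correct and follows essentially the same route as the paper's proof: part (i) via $\psi(0)=1$, and part (ii) via the $\mathrm{KL}\leq\chi^2$ reduction (your $\log(1+t)\leq t$ derivation is exactly this), the support argument lower-bounding $\radon{f_0}$ by $f^\ast(1)$ on $\mathrm{supp}(\radonind{s}{\Psi_{h,x}})$, Plancherel together with the projection theorem and the dilation scaling to extract $h^{2d-1}$, and finally the spherical $\Lp{1}$-type bound in the spirit of \eqref{ineq:L2:radon}. The only thing left implicit in your sketch is the explicit arithmetic assembling the numerical factors, which you correctly identify as routine bookkeeping.
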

	Selecting  $h := n^{-1/(2\beta+d-1)}$, it follows, using that $\left(\left(S_i,U_i\right)\right)_{i \in \llbracket n \rrbracket}$ are i.i.d., that
	\begin{align*}
		\KL{\mathds{P}_{\radon{f_1}}^n}{\mathds{P}_{\radon{f_0}}^n} &= n \KL{\mathds{P}_{\radon{f_1}}}{\mathds{P}_{\radon{f_0}}} \leq \widetilde{C}\left(L, \beta, d\right) \delta^2 \Lpinfty{\psi}^2 < \frac{1}{8},
	\end{align*}
 	for all $\delta<\delta(L,\beta,d)$ for some positive constants $\delta(L,\beta,d)$ depending on $L$, $\beta$, $d$. Therefore, we can use Theorem 2.2 of \cite{Tsybakov2008}, which yields
	\begin{align*}
		\inf_{\widehat{f}} \sup_{f \in \Sobolev{\beta}{L}} \PPindlr{\radon{f}}{\left\vert \widehat{f}\left(x\right) - f\left(x\right)\right\vert^2 > \frac{1}{2} \delta n^{-\frac{2\beta-d}{2\beta+d-1}}} &\geq \max\left\{\frac{1}{4} \exp\left(-\frac{1}{8}\right), \frac{1 - \sqrt{\frac{1}{16}}}{2} \right\} \\
		&= 0.375,
		\intertext{where the infimum is taken over all estimators $\widehat{f}$ of $f$, based on the i.i.d. data $\left(\left(S_i,U_i\right)\right)_{i\in\llbracket n \rrbracket}$. Using Markov's inequality gives}
		\inf_{\widehat{f}} \sup_{f \in \Sobolev{\beta}{L}} \EWindlr{\radon{f}}{\left\vert \widehat{f}\left(x\right) - f\left(x\right)\right\vert^2} &\geq c(L, \beta, d) n^{-\frac{2\beta-d}{2\beta+d-1}},
	\end{align*}
	where $c(L,\beta,d)$ is a positive constant depending on $L$, $\beta$ and $d$. This completes the proof. 
\end{proof}

\begin{proof}[Proof of Lemma \ref{hilfslemma 1: f1 density radon pointwise}]
	By an application of Equation \eqref{eq:pol:cor}, we deduce that 
	\begin{align*}
		\int_{\R^{d}} \Psi_{h, x}(y) \Lebesgue{d}{y} 
		&= h^{d} \int_{0}^\infty r^{d-1} \int_{\Sphere^{d-1}} \psi\left(\| r s \| \right) \surfacemeasure{d-1}{s} \inte r \\
		&=\rho_d h^{d} \int_0^\infty r^{d-1} \psi(r) \inte r=0
  \end{align*}
  implying $\int_{\R^d} f_1(y) \mathrm{d}\lambda^d(y)=\int_{\R^d} f_0(y)\mathrm{d}\lambda^d(y)=1$.
  Using that the function $\psi$ has support $\mathrm{supp}(\psi)$ in $[0,1]$ leads to $\mathrm{supp}\left(\Psi_{h,x}\right) \subseteq \left[x - h, x + h\right] := \bigtimes_{j=1}^d \left[x_{j} - h, x_{j} + h\right]$. For $y \not\in \left[x - h, x + h\right]$ holds  $f_1(y) = f_0(y) \geq 0$. Further, for $y \in \left[x - h, x + h\right]$ it holds that $\inf_{y \in \left[x - h, x + h\right]} f_0(y) \geq \inf_{y \in \left[x - 1, x + 1\right]} f_0(y) \geq (2\pi e)^{-d/2}$ and
    \begin{align*}
	f_1(y) &= f_0(y) + \delta h^{\beta-\frac{d}{2}} \Psi_{h, x}(y) \geq (2\pi e)^{-d/2} - \delta h^{\beta - \frac{d}{2}}. 
    \end{align*} 
    As  $h^{\beta-d/2} \leq 1$, choosing $\delta \in (0, (2\pi e)^{-d/2}]$ ensures the positivity of $f_1$. 
\end{proof}
\begin{proof}[Proof of Lemma \ref{hilfslemma 2: f0 f1 in fourier sobolev radon pointwise}]
	Our proof starts with the observation that for all $t \in \R^d$ we have $|\Fourier{d}{f_0}(t)| = \exp\left(-\frac{1}{2} \| t\|\right)$. Thus, for every $\beta \in \R_+$ there exists a positive constant $\widetilde{L}_{\beta,d}$ such that 
	\begin{align*}
		\int_{\R^d} \left|\Fourier{d}{f_0}(t)\right|^2 \left( 1+ \| t \|^2 \right)^{\beta} \Lebesgue{d}{t} &=: \widetilde{L}_{\beta,d},
	\end{align*}
	i.e. $f_0 \in \Sobolev{\beta}{L}$ for all $L\geq \widetilde{L}_{\beta,d}$. \newline
	Next, we consider $f_1$. From $|\Fourier{d}{\Psi_{h,x}}(t)|=|\Fourier{d}{\Psi_{1,0}}(ht)|, t\in \R^d$, we have
	\begin{align*}
		\Lpnlr{\Fourier{d}{f_0 - f_1}\left(1+\|\cdot\|^2\right)^{\beta/2}}{2}{\R^d}^2 &= 
			\delta^2h^{2\beta+d} \int_{\R^d} \left\vert \Fourier{d}{\Psi_{1,0}}\left(h t\right) \right\vert^2 \left(1 + \| t \|^2\right)^{\beta} \Lebesgue{d}{t} \\
			&= \delta^2 \int_{\R^d} \left\vert \Fourier{d}{\Psi_{1,0}}(t)\right\vert^2 \left(h^2 + \| t \|^2\right)^{\beta} \Lebesgue{d}{t}. 
   \end{align*}
    As $h^2\leq 1$, we conclude, by using the linearity of the Fourier transform,
    \begin{align*}
				\Lpnlr{\Fourier{d}{f_1}\left(1+\|\cdot\|^2\right)^{\beta / 2}}{2}{\R^d}^2 &\leq \left(\left(\widetilde{L}_{\beta,d}\right)^{1 / 2} + \delta  \Lpnlr{\Fourier{d}{\Psi_{1,0}}\left(1+\|\cdot\|^2\right)^{\beta / 2}}{2}{\R^d}\right)^2,
	\end{align*}
	i.e. for all $L \geq L_{\beta, d} := ((\widetilde{L}_{\beta,d})^{1 / 2} + \delta \Lpnlr{\Fourier{d}{\Psi_{1,0}}(1+\|\cdot\|^2)^{\beta / 2}}{2}{\R^d})^2$ holds $f_0$, $f_1 \in \Sobolev{\beta}{L}$. 
\end{proof}

\begin{proof}[Proof of Lemma \ref{hilfslemma 3: f0 f1 bounds radon pointwise}] 
\underline{For $(i)$}, we have
	\begin{align*}
		\left\vert f_0\left(x\right) - f_1\left(x\right)\right\vert^2 
		= \, \delta^2 h^{2\beta-d}\left\vert \psi(0) \right\vert^2 
		= \,\delta^2 h^{2\beta-d}. 
	\end{align*}
	\underline{Considering $(ii)$}, as $\KL{\mathds{P}_{\radon{f_1}}}{\mathds{P}_{\radon{f_0}}} \leq \chi^2\left(\mathds{P}_{\radon{f_1}}, \mathds{P}_{\radon{f_0}}\right)$ we have
	\begin{align*}
		\KL{\mathds{P}_{\radon{f_1}}}{\mathds{P}_{\radon{f_0}}} &\leq  \int_{\Sphere^{d-1}} \int_{\R}\frac{\left|\frac{1}{\rho_d} \radon{f_1-f_0}({s}, u)\right|^2}{\frac{1}{\rho_d} \radon{f_0}({s}, u)} \Lebesgue{}{u} \surfacemeasure{d-1}{{s}}\\
  &=\frac{\delta^2 h^{2\beta-d}}{\rho_d} \int_{\Sphere^{d-1}} \int_{\R} \frac{\left\vert\radon{\Psi_{h,x}}({s},u)\right\vert^2}{f^{\ast}(u-\scalarpr{x}{s})} \Lebesgue{}{u} \surfacemeasure{d-1}{{s}},
	\end{align*} 
	 where $f^{\ast}$ denotes the density of the standard normal distribution, i.e. $f^{\ast}(x) = \frac{1}{\sqrt{2 \pi}} \exp\left(-x^2 / 2\right)$, $x\in\R$. In Example \ref{ex:radon} we have already seen that $\radon{f_0}(s, u) = f^{\ast}(u-\scalarpr{x}{s})$, for $s\in\Sphere^{d-1}$, $u \in \R$. Now, we consider  $\radon{\Psi_{h, x}}$. Let $(s, {s}_1^{\perp},\ldots, {s}_{d-1}^{\perp})$ be an orthonormal basis of $\R^d$. Then $x = \scalarpr{x}{s}\cdot {s} + \sum_{j = 1}^{d-1}\langle x, {s}_j^{\perp}\rangle\cdot {s}_j^{\perp}$ and 
	\begin{align*}
		\radon{\Psi_{h, x}}(s,u)
		&= \int_{\R^{d-1}} \psi\left(h^{-1}\sqrt{\left\vert\scalarpr{x}{{s}} - u\right\vert^2 + \sum_{j=1}^{d-1}\left\vert \tau_j - \scalarpr{x}{{s}_j^{\perp}}\right\vert^2}\right) \Lebesgue{d-1}{\tau} \nonumber
	\end{align*}
 implying that $\mathrm{supp}(\radonind{s}{\Psi_{h,x}})\subseteq [\scalarpr{x}{{s}} - 1, \scalarpr{x}{{s}} + 1]$ for $s\in \Sphere^{d-1}$.
	 Thus, 
	\begin{align*}
		\chi^2\left(\mathds{P}_{\radon{f_1}}, \mathds{P}_{\radon{f_0}}\right) 
		&\leq  \frac{\delta^2 h^{2\beta-d}}{\rho_d f^{\ast}(1)} \int_{\Sphere^{d-1}} \Lpnlr{\radon{\Psi_{h, x}}(s, \cdot)}{2}{\R}^2 \surfacemeasure{d-1}{s}.
    \end{align*}
  Now the Plancherel equality and the projection theorem imply $\Lpn{\radonind{s}{\Psi_{h, x}}}{2}{\R}^2 =\Lpn{\Fourier{d}{\Psi_{h,x}}(\cdot s)}{2}{\R}^2/(2\pi)$. Therefore,
  \begin{align*}
       \Lpnlr{\radonind{{s}}{\Psi_{h, x}}}{2}{\R}^2 = \frac{h^{2d}\Lpn{\Fourier{d}{\Psi_{1,0}}(\cdot hs)}{2}{\R}^2}{2\pi} = \frac{h^{2d-1}\Lpn{\Fourier{d}{\Psi_{1,0}}(\cdot s)}{2}{\R}^2}{2\pi}.
  \end{align*}
  Following the similar arguments as in Equation \eqref{ineq:L2:radon}, we get
\begin{align*}
    \int_{\Sphere^{d-1}} \Lpn{\Fourier{d}{\Psi_{1,0}}(\cdot s)}{2}{\R}^2 \surfacemeasure{d-1}{{s}} &\leq 2\left(\rho_d \Lpn{\Psi_{1,0}}{1}{\R^d}^2 + \Lpn{\Fourier{d}{\Psi_{1,0}}}{2}{\R^d}^2\right)\\
    &\leq 2\rho_d\left(\rho_d+(2\pi)^d \right).
\end{align*} 
Therefore,
\begin{align*}
    \phantom{hallohallohalloo}\chi^2\left(\mathds{P}_{\radon{f_1}}, \mathds{P}_{\radon{f_0}}\right) \leq \delta^2 h^{2\beta+d-1}(\rho_d+(2\pi)^d)(2\pi e)^{-1/2}. \phantom{hallohallohalloo}\qedhere
\end{align*}
\end{proof}

\subsection{Proofs of Section \ref{sec_dd}}\label{a:dd}

\begin{proof}[Proof of Theorem \ref{theo:adap}]
First, for $m\in \Mn$
    \begin{equation*}
      \left|f(x)-\widehat{f}_{\widehat m}(x)\right|^2 \leq 
      2\left|f(x)- \widehat f_m(x)\right|^2 +2 \left|\widehat f_m(x)-
      \widehat f_{m \wedge \widehat m}(x)\right|^2
      + 2\left|\widehat f_{m \wedge \widehat m}(x)- \widehat f_{\widehat m}(x)\right|^2. 
    \end{equation*}
    Next, the definition \eqref{def:hVhA} of
    $\widehat A$, and secondly the definition \eqref{eq:adap:m} of
    $\widehat m$ imply
    \begin{align*}
      \left|f(x)-\widehat{f}_{\widehat m}(x)\right|^2
      &\leq
        2\left|f(x)- \widehat f_m(x)\right|^2 + 2\left( \widehat A\left(\widehat m\right)+ \widehat V(m)+ \widehat A(m)+ \widehat V\left(\widehat m\right)\right) \\
      & \leq 2\left|f(x)- \widehat f_m(x)\right|^2 + 4\left( \widehat A(m)+ \widehat V(m)\right) . 
     \end{align*}
     It is easy to check that $\widehat A(m) \leq A(m)+
     \max\{(V(m')-\widehat V(m'))_+:m'\in\Mn\}$, and hence
     \begin{align}\label{theo:adap:p:e1}
       &\phantom{\leq}|f(x)-\widehat{f}_{\widehat m}(x)|^2 \notag\\
       &\leq 2\left|\left(f- \widehat f_m\right)(x)\right|^2  + 4A(m)
       +4\left(\left(\widehat V-2V\right)(m)\right) + 8 V(m) + 4 \max_{m'\in \Mn}\left(\left(V-\widehat V\right)(m')\right)_+\notag\\
       &\leq 10V(m) + 4|f(x)- f_m(x)|^2  
       +4\left(\widehat V(m)-2V(m)\right) + 4 \max_{m'\in \Mn}\left(V\left(m'\right)-\widehat V\left(m'\right)\right)_+ \notag\\
       &\phantom{\leq}
       + 4\left(\left|f_m(x)- \widehat f_m(x)\right|^2-V(m)/3\right)_+ + 4A(m).
     \end{align}
     Further, for $m'\in  \Mn, m'>m$, we get the inequality
     \begin{align*}
     \left|\widehat f_{m'}(x)- \widehat f_{m}(x)\right|^2\leq
     3\left|\widehat f_{m'}(x)- f_{m'}(x)\right|^2+ 3\left|\widehat f_{m}(x)- f_{m}(x)\right|^2
     +3\left|f_{m}(x)- f_{m'}(x)\right|^2
     \end{align*}from which we conclude
     \begin{align}\label{theo:adap:p:e2}
       A(m) &\leq 3\max_{m'\in \llbracket m+1, \mathrm{M}_n\rrbracket} \left
         ( \left|\widehat f_{m'}(x)-f_{m'}(x)\right|^2+\left|\widehat
         f_{m}(x)-f_{m}(x)\right|^2- \frac{V(m')}{3} \right)_+ \notag
       \\&\phantom{\leq}+3\max_{m'\in \llbracket m+1, \mathrm{M}_n\rrbracket} |f_m(x)-f_{m'}(x)|^2\notag\\
       & \leq 6\max_{m'\in \llbracket m, \mathrm{M}_n\rrbracket} \left
         ( \left|\widehat f_{m'}(x)-f_{m'}(x)\right|^2 - \frac{V(m')}{3} \right)_+ +V(m)\notag\\
         &\phantom{\leq}+3\max_{m'\in \llbracket m+1, \mathrm{M}_n\rrbracket} |f_m(x)-f_{m'}(x)|^2.
     \end{align}
    Now for any $m'\in \llbracket m+1, \mathrm{M}_n\rrbracket$ 
     \begin{align}\label{theo:adap:p:e3}
       \hspace*{-0.4cm}\left|f_m(x)-f_{m'}(x)\right| \leq 
       \frac{1}{(2\pi)^d}\int_{\ball{m'}\setminus\ball{m}} \left|\Fourier{d}{f}(t)\right| \inte\lambda^d(t)
       \leq  \frac{\|\mathds{1}_{\ballc{m}}\Fourier{d}{f}\|_{\Lp{1}(\R^d)}}{(2\pi)^d}.
     \end{align}
     Combining \eqref{theo:adap:p:e1} with \eqref{theo:adap:p:e2}, \eqref{theo:adap:p:e3}, 
     $|f_m(x)-f(x)| \leq
     \|\mathds{1}_{\ballc{m}}\Fourier{d}{f}\|_{\Lp{1}(\R^d)}(2\pi)^{-d}$ and  $\EWind{\radon{f}}{\widehat
     V(m)}=2V(m)$ for each $m\in\Mn$ we obtain
     \begin{align}\label{theo:adap:p:e4}
       &\EWindlr{\radon{f}}{\left|f(x)-\widehat{f}_{\widehat m}(x)\right|^2}
       \leq 16\left(\frac{\|\mathds{1}_{\ballc{m}}\Fourier{d}{f}\|^2_{\Lp{1}(\R^d)}}{(2\pi)^{2d}} +V(m)\right) 
       \notag \\
       &+ 4 \EWindlr{\radon{f}}{\max_{m'\in \Mn}\left(V-\widehat V\right)_+\left(m'\right)}
       + 28\EWindlr{\radon{f}}{\max_{m'\in \Mn} \left( \left|\widehat f_{m'}-f_{m'}\right|^2(x)-  \frac{V(m')}{3} \right)_+}.
     \end{align}
     We bound the last two summands in \eqref{theo:adap:p:e4} with the
     help of Lemma \ref{lem:concen} below. 
     \begin{lem}\label{lem:concen}
      Under the assumptions of Theorem \ref{theo:adap} for all $\chi \geq 24$ holds
      \begin{enumerate}
          \item[$(i)$] $\EWind{\radon{f}}{\max_{m'\in \Mn}(V-\widehat V)_+(m')}\leq C(\chi, d) n^{-1}$,
          \item[$(ii)$] $\EWind{\radon{f}}{\max_{m'\in \Mn} ( |\widehat f_{m'}-f_{m'}|^2(x)-  V(m')/3)_+} \leq C(f,d) n^{-1}$ 
      \end{enumerate}
      where $C(f,d)>0$ is a positive constant depending on $f$ and $d$.
     \end{lem}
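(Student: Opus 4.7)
Both parts rest on Bernstein's inequality (Lemma~\ref{ineq: bernstein}) applied to suitable i.i.d.\ sums, combined with the growth restriction $m' \leq \mathrm{M}_n = n^{1/(2d-1)}$ enforced by $\Mn$.

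For (i), I plan to rewrite $(V - \widehat V)(m) = \chi m^{2d-1}\log(n) n^{-1}(\mu_m - 2\widehat\mu_m - 1)$; since $\widehat\mu_m \geq 0$, the event $\{(V - \widehat V)_+(m) > 0\}$ forces $|\widehat\mu_m - \mu_m| \geq (\mu_m + 1)/2 \geq 1/2$. Apply Bernstein to $T_i := m K_1^2(m(\scalarpr{S_i}{x} - U_i))$ with the crude but $f$-free parameters $|T_i| \leq m\|K_1\|_\infty^2$ and $\Var_{\radon{f}}(T_1) \leq m^2\|K_1\|_\infty^4$: with $\eta = 1/2$ this gives $\PPindlr{\radon{f}}{|\widehat\mu_m - \mu_m| \geq 1/2} \leq 2\exp(-cn/m^2)$ with $c$ depending only on $d$. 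Since $(V - \widehat V)_+(m) \leq Cm^{2d}\log(n)/n$ on that event (using $\widehat\mu_m \leq m\|K_1\|_\infty^2$), a union bound combined with $n/m^2 \geq n^{(2d-3)/(2d-1)}$ for $m \in \Mn$ makes the exponential factor dominate any polynomial prefactor, and summing yields $O(1/n)$ with a constant depending only on $\chi$ and $d$.

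For (ii), I apply Bernstein to $T_i := K_{m'}(\scalarpr{S_i}{x} - U_i)$, which has mean $f_{m'}(x)$; here $|T_i| \leq c_d m'^d =: b$, and the computation in the proof of Proposition~\ref{prop:mse} gives $\Var_{\radon{f}}(T_1) \leq m'^{2d-1}\mu_{m'} =: v^2$. Using the layer-cake identity
\begin{equation*}
    \EWindlr{\radon{f}}{\left(|\widehat f_{m'}(x) - f_{m'}(x)|^2 - V(m')/3\right)_+} = \int_0^\infty \PPindlr{\radon{f}}{|\widehat f_{m'}(x) - f_{m'}(x)| \geq \sqrt{V(m')/3 + t}}\, dt,
\end{equation*}
I would plug $\eta = \sqrt{V(m')/3 + t}$ into Bernstein and split the max as a sum. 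The sub-Gaussian exponent becomes $n(V(m')/3 + t)/(4v^2) = \chi\log(n)(1+\mu_{m'})/(12\mu_{m'}) + nt/(4v^2) \geq 2\log(n) + nt/(4v^2)$ precisely when $\chi \geq 24$, integrating to $Cv^2/n^3 = Cm'^{2d-1}\mu_{m'}/n^3$. The sub-exponential exponent $n\sqrt{V(m')/3+t}/(4b)$ is at least $c_d n^{(d-1)/(2d-1)}\sqrt{\log n}$ at $t=0$ and is monotone in $t$; the substitution $u = n\sqrt{V(m')/3+t}/(4b)$ turns its integral into a bound of order $b^2/n^{k} = m'^{2d}/n^k$ for any $k$. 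Summing over $m' \in \Mn$ with $M_n \leq n^{1/(2d-1)}$, the dominant sub-Gaussian piece gives $\sum m'^{2d-1}/n^3 \lesssim n^{2d/(2d-1)-3} \leq n^{-5/3}$ and the sub-exponential one is even smaller, yielding the claimed $O(1/n)$.

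The main difficulty is the sharp calibration in part (ii) between the constant $\chi$ and the variance proxy $v^2 = m'^{2d-1}\mu_{m'}$. The factor $(1+\mu_{m'})$ (rather than $\mu_{m'}$) in $V$ is what ensures $(1+\mu_{m'})/\mu_{m'} \geq 1$ uniformly over $m' \in \Mn$, so that $\chi \geq 24$ makes the sub-Gaussian Bernstein exponent $\geq 2\log n$ \emph{simultaneously} for all $m'$, regardless of how small $\mu_{m'}$ is. Everything else is careful bookkeeping of the two Bernstein regimes and checking that, despite $|\Mn|$ and the prefactors $m'^{2d}$ growing polynomially, the superpolynomial decay in the Bernstein tails absorbs the summation over $\Mn$.
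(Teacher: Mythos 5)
Your proposal is correct and, for part (ii), follows essentially the same route as the paper: a layer-cake identity, Bernstein's inequality from Lemma~\ref{ineq: bernstein} with $T_j = K_{m'}(\scalarpr{S_j}{x}-U_j)$, splitting into the sub-Gaussian and sub-exponential regimes, and summing over the polynomially many elements of $\Mn$. The paper takes $v^2 := m'^{2d-1}(1+\mu_{m'})$ so that the sub-Gaussian exponent cancels exactly to $\tfrac{\chi}{12}\log n$; you instead take $v^2 := m'^{2d-1}\mu_{m'}$ and use $(1+\mu_{m'})/\mu_{m'}\geq 1$ to reach the same threshold $\chi\geq 24$. Both are valid bounds on $\Varind{\radon{f}}{T_1}$ and lead to the same constant; your remark that the ``$1+$'' in $V(m)$ is precisely what makes the calibration uniform in $\mu_{m'}$ is the right way to understand why the paper defines $V$ that way.

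For part (i) you take a genuinely different route: after noting that $(V-\widehat V)_+(m')>0$ forces $|\widehat\mu_{m'}-\mu_{m'}|\geq 1/2$ (which is exactly the same reduction the paper makes), you bound $\PPind{\radon{f}}{|\widehat\mu_{m'}-\mu_{m'}|\geq 1/2}$ with Bernstein and an $f$-free variance proxy, whereas the paper writes $(V-\widehat V)_+(m')\lesssim \chi\log(n)\,2^p|\mu_{m'}-\widehat\mu_{m'}|^p$ on that event and invokes Petrov's moment inequality (Theorem~2.10 of \cite{Petrov1995}) to bound $\EWind{\radon{f}}{|\widehat\mu_{m'}-\mu_{m'}|^p}\leq C(p,d)(m')^p n^{-p/2}$, then picks $p$ large. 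Your Bernstein route gives a super-polynomial tail $\exp(-cn/(m')^2)\leq \exp(-c\,n^{(2d-3)/(2d-1)})$ that swallows the polynomial prefactor $(m')^{2d}\log(n)/n$ and the sum over $\Mn$; the paper's moment route gives a polynomial bound $n^{p(3-2d)/(4d-2)}$ that achieves $O(1/n)$ after choosing $p$ large enough. Both work and keep the constant $f$-free because they only use $|K_1|\leq \rho_d/((2\pi)^d d)$; your version avoids introducing the free parameter $p$ and Petrov's inequality at the cost of invoking Bernstein a second time. One small slip: you write ``using $\widehat\mu_m \leq m\|K_1\|_\infty^2$'' to bound $(V-\widehat V)_+(m)\leq V(m)$, but since $\widehat V\geq 0$ the bound $(V-\widehat V)_+\leq V$ needs only $\mu_m\leq m\|K_1\|_\infty^2$, the deterministic bound on the \emph{true} mean; this is clearly what you meant and does not affect the argument.
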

     Taking now the infimum over all $m\in \Mn$ completes the proof.
\end{proof}

\begin{proof}[Proof of Lemma \ref{lem:concen}]
\underline{Starting with $(i)$}, we have for $m'\in \Mn$
\begin{align*}
    \left(V(m')-\widehat V(m')\right)_+&= \chi \frac{(m')^{2d-1}}{n}\log(n)\left((1+\mu_{m'})-2\left(1+\widehat\mu_{m'}\right)\right)_+\\
    &\leq \chi\log(n) 2|\mu_{m'}-\widehat\mu_{m'}|\mathds 1_{\left|\widehat \mu_{m'}-\mu_{m'}\right|\geq \frac{1}{2}} \leq \chi \log(n) 2^p \left|\mu_{m'}-\widehat\mu_{m'}\right|^p
\end{align*}
for any $p\geq 1$. Now, as $|K_1(y)| \leq \rho_d/((2\pi)^dd)$ for all $y\in \R$, Theorem 2.10 of \cite{Petrov1995} implies $\EWind{\radon{f}}{|\mu_{m'}-\widehat\mu_{m'}|^p} \leq C(p, d) \left({m'}\right)^p n^{-p/2} \leq C(p, d) n^{p(3-2d)/(4d-2)}$. We deduce
\begin{align*}
    \EWindlr{\radon{f}}{\max_{m'\in \Mn}\left(V-\widehat V\right)_+(m')} &\leq 2^p\chi\log(n)\sum_{m'\in \Mn} \EWindlr{\radon{f}}{\left|\mu_{m'}-\widehat\mu_{m'}\right|^p}\\
    &\leq C(p,\chi, d) \log(n)n^{1/(2d-1)} n^{p\frac{3-2d}{4d-2}} \leq C(\chi, d)n^{-1}
\end{align*}
for $p$ large enough.\\
\underline{For $(ii)$} we use that \begin{align*}
    \max_{m^{\prime} \in \Mn} \left(\left|\widehat{f}_{m^{\prime}} \left(x\right) - f_{m^{\prime}}\left(x\right)\right|^2 - \frac{V\left(m^{\prime}\right)}{3}\right)_+  \leq \sum_{m \in \Mn} \left(\left|\widehat{f}_{m} \left(x\right) - f_{m}\left(x\right)\right|^2 - \frac{V\left(m\right)}{3}\right)_+
\end{align*}
and bound each summand separately. To do so, we exploit 
\begin{align}
   & \EWindlr{\radon{f}}{ \left(\left|\widehat{f}_{m} \left(x\right)- f_{m}\left(x\right)\right|^2 - \frac{V\left(m\right)}{3}\right)_+}\notag\\
    &\quad= \int_{0}^{\infty} \PPindlr{\radon{f}}{\left(\left(\widehat{f}_{m} \left(x\right) - \EWindlr{\radon{f}}{\widehat{f}_m\left(x\right)}\right)^2 - \frac{V\left(m\right)}{3}\right)_+ \geq y} \inte y\notag\\
    &\quad\leq \int_{0}^{\infty} \PPindlr{\radon{f}}{\left\vert \widehat{f}_m\left(x\right) - \EWindlr{\radon{f}}{\widehat{f}_m\left(x\right)} \right\vert \geq \sqrt{\frac{V(m)}{3} + y}} \inte y. 
\end{align} 
	The probability in the last term can be bounded by applying the Bernstein inequality, see Lemma \ref{ineq: bernstein}. To do so, defining $T_j := K_m\left(\scalarpr{S_j}{x} - U_j\right)$ for $j=1,\ldots,n$ yields
	\begin{align*}
		\widehat{f}_m\left(x\right) - \EWindlr{\radon{f}}{\widehat{f}_m\left(x\right)} = \frac{1}{n}\sum_{j=1}^{n} \left(T_j - \EWindlr{\radon{f}}{T_j}\right)
	\end{align*}
	and $\left\vert T_j \right\vert\leq \rho_d/((2 \pi)^d d)m^d=:b$ and $\Varind{\radon{f}}{T_j} \leq \EWind{\radon{f}}{T_j^2} \leq  m^{2d-1}(1+\mu_m)=: v^2$.  By the Bernstein inequality, see Lemma \ref{ineq: bernstein}, we have for any $y > 0$ and $\alpha \in [0,1]$
	\begin{multline*}
		\PPindlr{\radon{f}}{\left\vert \widehat{f}_m\left(x\right) - \EWindlr{\radon{f}}{\widehat{f}_m\left(x\right)} \right\vert \geq \sqrt{\frac{V(m)}{3} + y}} \\
		\leq 2 \max\left\{\exp\left(-\frac{n}{4v^2}\left(\frac{V(m)}{3} + y\right)\right), \exp\left(-\frac{n \alpha}{4b} \sqrt{\frac{V(m)}{3}}\right)\exp\left(-\frac{n(1-\alpha)}{4b} \sqrt{y}\right)\right\},
	\end{multline*}
	where the concavity of the square root has been used in the last step. By the definition of $V(m)$ and of $v^2$, we get
	\begin{align*}
		\frac{n}{4v^2}\frac{V(m)}{3} = \frac{\chi}{12} \log(n) \geq p \log(n)
	\end{align*} 
	for $\chi \geq 12 p$, where $p \in \R_+$. Moreover, 
	\begin{align*}
		\frac{n \alpha}{4b} \sqrt{\frac{V(m)}{3}} 
		&= \frac{\alpha}{4} \sqrt{\frac{\chi (2 \pi)^{2d} d^2 (1+\mu_m)}{3 \rho_d^2}} \sqrt{\frac{n \log(n)}{m}} 
		=: \gamma_{\alpha} \sqrt{\frac{n \log(n)}{m}} 
		\geq p \log(n)
	\end{align*}
	if $n/m \geq (p/\gamma_{\alpha})^2 \log(n)$. Latter holds true by choosing $\alpha \in \left[0, \frac{1}{2}\right)$ small enough so that $p /\gamma_{\alpha} > 1$ is satisfied. Then the above inequality holds under the definition of $\Mn$. From this we deduce that for $p=2$ and as $\mu_m \leq C_d(1+ \|\Fourier{d}{f}\|_{\Lp{1}(\R^d)})$ for some positive constants $C_d$ only depending on $d$ that
    \begin{align*}
        \PPindlr{\radon{f}}{\left\vert \widehat{f}_m\left(x\right) - \EWindlr{f}{\widehat{f}_m\left(x\right)} \right\vert \geq \sqrt{\frac{V(m)}{3} + y}} 
        &\leq \frac{2}{n^2}  \max\left\{e^{-\frac{ny}{4v^2}}, e^{- \frac{n}{8b}\sqrt{y}}\right\}  \\
        &\leq  \frac{2}{n^2}\left(e^{- \tau_1 y}\vee e^{-\tau_2  \sqrt{y}}\right)
    \end{align*}
    where $\tau_1 := 1 /(4 C_d \left(2 + \Lpn{\Fourier{d}{f}}{1}{\R^d}\right))$, $\tau_2 := ((2 \pi)^d d)/(8 \rho_d)$. Thus 
    \begin{align*}
		\EWindlr{\radon{f}}{\max_{m^{\prime} \in \Mn}\left(\left|\widehat{f}_{m^{\prime}}\left(x\right) - f_{m^{\prime}}\left(x\right)\right|^2 - \frac{V\left(m^{\prime}\right)}{3}\right)_+} &\leq \frac{2}{n^2} \sum_{m \in \Mn} \int_0^{\infty} \left(e^{- \tau_1 y}\vee e^{-\tau_2  \sqrt{y}}\right) \inte y\\
		&\leq \frac{2}{n^2} \sum_{m \in \Mn} \left(\frac{1}{\tau_1}\vee \frac{2}{\tau_2^2}\right)
		\leq \frac{C(f, d)}{n} 
	\end{align*}
	using $m \leq n^{\frac{1}{2d-1}} \leq n^{\frac{1}{d}}$ and the definition of $\Mn$.
\end{proof}

\begin{proof}[Proof of Theorem \ref{thm:low:adap}]
    We use the same construction and similar steps as in the proof of Theorem \ref{thm:low:bou}. 
    For $f_0$, we choose the density of the multivariate normal distribution, i.e.
	\begin{align}\label{eq:f:2}
		f_0(y) &= \frac{1}{\left(\sqrt{2 \pi}\right)^d} \exp\left(- \frac{1}{2} \scalarpr{x-y}{x-y}\right) \text{ for } x, y \in \R^d.
  \end{align}
	 Let $\psi \in C_0^\infty\left(\R\right)$ be a function with  $\mathrm{supp}(\psi)\subseteq[0,1]$, $\psi(0)=1$, $\|\psi\|_{\infty}=1$ and $\int_0^1 y^{d-1} \psi(y) \inte y = 0$. For a bump-amplitude $\delta > 0$ and for $h \in (0,1)$ (to be chosen later on) we define
    \begin{align}
		f_1(y) &:= f_0(y) + \delta h^{\beta' - \frac{d}{2}} \Psi_{h, x}(y), \label{eq: f1 fourier radon adap}
	\end{align}
	where $x$, $y \in \R^d$ and $\Psi_{h, x}(y) := \psi(\| y - x \|/h)$. Let $0 < \delta \leq (2\pi e)^{-d/2}$. Then $f_1$, as in Equation \eqref{eq:f:2}, is a density. Similar, to the proof of Lemma \ref{hilfslemma 2: f0 f1 in fourier sobolev radon pointwise} we see that $f_0 \in \Sobolev{\beta}{L}$, for $L > L_{\beta, d}$ and $f_1 \in \Sobolev{\beta'}{L}$ for $L > L_{\beta', d}$ where $L_{\beta',d}< L_{\beta, d}$. More precisely, for $L>L_{\beta, d}$ we have
 \begin{enumerate}
    \item[a)] $f_0$ and $f_1$ are densities for $\delta<(2\pi e)^{-d/2}$,
    \item[b)] $f_0\in \Sobolev{\beta}{L}$ and $f_1 \in \Sobolev{\beta'}{L}$,
    \item[c)] $|f_0(x)-f_1(x)| = \delta h^{\beta'-d/2}$ and
    \item[d)] $\chi^2\left(\mathds{P}_{\radon{f_1}},\mathds{P}_{\radon{f_0}}\right) \leq C(d)\delta^2 h^{2\beta'+d-1}$ with $C(d)= (\rho_d+(2\pi)^d)(2\pi e)^{-1/2}$.
 \end{enumerate}
 Now by assumption, and as $f_0 \in \Sobolev{\beta}{L}$ we have $$\EWindlr{\radon{f_0}}{\left|\widehat f_n(x) - f_0(x)\right|^2} \leq \mathfrak C n^{-\frac{2\beta-d}{2\beta+d-1}}.$$ First, we use the Jensen inequality to deduce $\sup_{f \in \Sobolev{\beta'}{L}}(\EWind{\radon{f}}{\vert \widehat f_n(x) - f(x) \vert^2} )^{1/2} 
        \geq \EWind{\radon{f_1}}{\vert \widehat f_n(x) - f_1(x)\vert}$. This gives, using the Radon-Nikodym derivative and Cauchy-Schwarz inequality, 
       \begin{align*}
        &\EWindlr{\radon{f_1}}{\left\vert \widehat f_n(x) - f_1(x)\right\vert} \geq |f_0(x)-f_1(x)| -  \EWindlr{\radon{f_1}}{\left\vert \widehat f_n(x) - f_0(x)\right\vert}\\
        &\geq \left\vert f_0(x) - f_1(x) \right\vert - \EWindlr{\radon{f_0}}{\left\vert \widehat f_n(x) - f_0(x)\right\vert^2}^{\frac{1}{2}} \left(\EWindlr{\radon{f_0}}{\left(\frac{\inte \mathds{P}^n_{\radon{f_1}}}{\inte \mathds{P}^n_{\radon{f_0}}} - 1\right)^2}^{\frac{1}{2}} + 1\right) \\
        &\geq \delta h^{\beta'- d/2} - 2\mathfrak C^{1/2} n^{-\frac{2\beta - d}{4\beta + 2d -2}} \left(1 + \chi^2\left(\mathds{P}^n_{\radon{f_1}}, \mathds{P}^n_{\radon{f_0}}\right)\right), 
    \end{align*}
    According to \cite{Tsybakov2008}, we have 
    \begin{align*}
        1+\chi^2\left(\mathds{P}^n_{\radon{f_1}}, \mathds{P}^n_{\radon{f_0}}\right) &= \prod_{j=1}^n \left( 1 + \chi^2 \left(\mathds{P}_{\radon{f_1}}, \mathds{P}_{\radon{f_0}}\right)\right)  
        \leq \exp\left(\sum_{j=1}^n \chi^2\left(\mathds{P}_{\radon{f_1}}, \mathds{P}_{\radon{f_0}}\right)\right). 
    \intertext{Now let $\xi:= \frac{2\beta-d}{2\beta+d-1} - \frac{2\beta'-d}{2\beta'+d-1}>0$. Using d) yields}
        1+ \chi^2\left(\mathds{P}^n_{\radon{f_1}}, \mathds{P}^n_{\radon{f_0}}\right) &\leq \exp\left( C(d) n\delta^2 h^{2\beta' + d -1}\right) \leq \exp\left(\xi nh^{2\beta'+d-1}/2\right)
    \end{align*}
    for $\delta < \delta(\beta, \beta', d)$.  Choosing $h=(\frac{n}{\log(n)})^{-1/(2\beta'+d-1)}$ leads to
    \begin{align*}
    \sup_{f \in \Sobolev{\beta'}{L}}\left(\EWindlr{\radon{f}}{\left\vert \widehat f_n(x) - f(x) \right\vert^2} \right)^{\frac{1}{2}} & \geq \left(\frac{n}{\log(n)}\right)^{-\frac{\beta'-d/2}{2\beta'+d-1}}\left(\delta-2\mathfrak C^{1/2}\log(n)^{-\frac{\beta'-d/2}{2\beta'+d-1}} \right)\\
    &\geq\mathfrak c(\mathfrak C, \beta, \beta', d)\left(\frac{n}{\log(n)}\right)^{-\frac{\beta'-d/2}{2\beta'+d-1}}
    \end{align*}
    for $n\in \N$ big enough. This completes the proof.
\end{proof}

\bibliographystyle{plain} 
\bibliography{reference}

\end{document}